\definecolor{darkgreen}{rgb}{0.0, 0.63, 0.0}
\theoremstyle{definition}
\theoremstyle{lemma}
\newtheorem{theorem}{Theorem}
\newtheorem{lemma}[theorem]{Lemma}
\newtheorem{proposition}{Proposition}
\DeclareMathOperator{\Log}{Log}
\newcommand{\vol}{\text{Vol}}
\newcommand{\norm}{\text{Nm}}
\newcommand{\trace}{\text{Tr}}
\begin{document}
\title{On Pisot Units and the Fundamental Domain of Galois Extensions of $\mathbb{Q}$}
\author[]{Christian Porter, Alexandre Bali, Alar Leibak}
\maketitle
\begin{abstract}
In this paper, we present two main results. Let $K$ be a number field that is Galois over $\mathbb{Q}$ with degree $r+2s$, where $r$ is the number of real embeddings and $s$ is the number of pairs of complex embeddings. The first result states that the number of facets of the reduction domain (and therefore the fundamental domain) of $K$ is no greater than $O\left(\left(\frac{1}{2}(r+s-1)^\delta(r+s)^{1+\frac{1}{2(r+s-1)}}\right)^{r+s-1}\right) \cdot\left(e^{1+\frac1{2e}}\right)^{r+s}(r+s)!$, where $\delta=1/2$ if $r+s \leq 11$ or $\delta=1$ otherwise. The second result states that there exists a linear time algorithm to reduce a totally positive unary form $axx^*$, such that the new totally positive element $a^\prime$ that is equivalent to $a$ has trace no greater than a constant multiplied by the integer minimum of the trace-form $\trace(axx^*)$, where the constant is determined by the shortest Pisot unit in the number field. This may have applications in ring-based cryptography. Finally, we show that the Weil height of the shortest Pisot unit in the number field can be no greater than $\frac{1}{[K:\mathbb{Q}]}\left(\frac{\gamma}{2}(r+s-1)^{\delta-\frac{1}{2(r+s-1)}}R_K^{\frac{1}{r+s-1}}+(r+s-1)\epsilon\right)$, where $R_K$ denotes the regulator of $K$, $\gamma=1$ if $K$ is totally real or $2$ otherwise, and $\epsilon>0$ is some arbitrarily small constant.
\end{abstract}
\section{Introduction}
Let $K$ be an algebraic field of degree $n=r+2s$ (where $r$ is the number of real embeddings and $s$ is the number of pairs of complex embeddings) over $\mathbb{Q}$ with ring of integers $\mathcal{O}_K$ and unit group $\mathcal{O}_K^*$. We associate to $K$ the canonical embeddings $\sigma_1,\dots,\sigma_{r},\sigma_{r+1},\dots,\sigma_{r+s},\dots,\sigma_{r+2s}$ into $\mathbb{C}$, where $\sigma_{r+s+k}(x)=\sigma_{r+k}(x)^*$ for all $1 \leq k \leq s$ and $*$ denotes the complex conjugate.

Define by $K_{\mathbb{R}}=K \otimes_{\mathbb{Q}} \mathbb{R}$. Note that $K_{\mathbb{R}}=\mathbb{R}^r \times \mathbb{C}^s$. We define the canonical involution $*: K_{\mathbb{R}} \to K_{\mathbb{R}}$ that acts as the identity on $\mathbb{R}^{r}$ and acts as complex conjugation on $\mathbb{C}^{s}$. For any $\alpha=(\alpha_1,\dots,\alpha_{r+s}),\beta=(\beta_1,\dots,\beta_{r+s}) \in K_{\mathbb{R}}$, define $\alpha \beta= (\alpha_1\beta_1,\alpha_2\beta_2,\dots,\alpha_{r+s}\beta_{r+s})$. We say an element $\alpha=(\alpha_1,\dots,\alpha_{r+s}) \in K_{\mathbb{R}}$ is totally positive if every $\alpha_i \in \mathbb{R}^+$. Throughout the paper, we will assume that any number field $K$ that we consider is Galois over $\mathbb{Q}$. In fact, the only lemma that requires this property is Lemma \ref{pisotred}, but unfortunately this lemma is crucial to prove the result of the paper, so we must restrict ourselves to such fields.

Consider
\begin{align*}
\trace(axx^*), \hspace{2mm} x,a \in K_{\mathbb{R}},
\end{align*}
where $a$ is a totally positive element. This generates a real positive-definite quadratic form of dimension $r+s$. We call $axx^*$ a unary form.

We will set $\mathcal{O}_{K_{\mathbb{R}}}$ to be the set of elements $(\sigma_1(x),\dots,\sigma_{r+s}(x))$, where $x \in \mathcal{O}_K$, and $\mathcal{O}_{K_{\mathbb{R}}}^*$ the set of elements $(\sigma_1(u),\dots,\sigma_{r+s}(u))$ such that $u \in \mathcal{O}_K^*$. Then a totally positive element $a$ is said to be reduced if it satisfies
\begin{align}
\trace(a) \leq \trace(avv^*), \label{unitineq}
\end{align}
for all $v \in \mathcal{O}_{K_{\mathbb{R}}}^*$. If $v=(\sigma_1(u),\dots,\sigma_{r+s}(u))$, we use the notation $v^{-1}=(\sigma_1(u^{-1}),\dots,\sigma_{r+s}(u^{-1})) \in \mathcal{O}_{K_\mathbb{R}}^*$. We say that two totally positive elements $a,a^\prime$ are equivalent if $a^\prime = avv^*$ for some $v \in \mathcal{O}_{K_{\mathbb{R}}}^*$. Note then that since $a=a^\prime v^{-1}{v^{-1}}^*$, $a$ can be considered by its equivalence class, where equivalence is determined by multiplying $a$ by $vv^*$ where $v \in \mathcal{O}_{K_\mathbb{R}}^*$, and so the real quadratic forms $\trace(axx^*),\trace(a^\prime xx^*)$ are equivalent. The reduction domain of $K_{\mathbb{R}}$, denoted $\mathcal{F}_{K_{\mathbb{R}}}$, is the set of all reduced totally positive elements of $K_{\mathbb{R}}$, and so clearly every positive element is equivalent to an element in $\mathcal{F}_{K_\mathbb{R}}$. Note that the reduction domain is a fundamental domain for the set of totally positive elements of $K$. The reduction domain is known to be the union of finitely many perfect cones (\cite[Satz 4]{koe60}). In \cite{perfectformsupperbound}, an upper bound on the number of perfect unary forms in any given totally real number field was determined. The facets of the cone $\mathcal{F}_K$ are defined by the inequalities \ref{unitineq}, and it is known that the number of facets of the reduction domain are finite in number, meaning that only finitely many inequalities need to be satisfied in order to determine whether or not a unary form is reduced.

Let $x \in \mathcal{O}_K$ be an algebraic integer of $K$. We say that $x$ is a Pisot-Vijayaraghavan number (shortened to a Pisot number) if the absolute value of $x$ is greater than $1$, but the absolute value of all its Galois conjugates, except for the conjugates that correspond to complex conjugation, have absolute value less than $1$. We say that $x$ is a Pisot unit if $x \in \mathcal{O}_K^*$.

By Dirichlet's unit theorem, we know that the rank of $\mathcal{O}_K^*$ is $r+s-1$. Suppose then that $\mathcal{O}_K^*$ is multiplicatively generated by the elements $u_1,u_2,\dots,u_{r+s-1}$ and $\zeta$ where $\zeta$ is some root of unity in $\mathcal{O}_K$. Consider the logarithmic embedding:

\begin{align*}
\Log: K \to \mathbb{R}^{r+s}: \Log(x)=&(\log(|\sigma_1(x)|),\log(|\sigma_2(x)|),\dots, \log(|\sigma_{r}(x)|),\\&2\log(|\sigma_{r+1}(x)|),2\log(|\sigma_{r+2}(x)|),\dots,2\log(|\sigma_{r+s}(x)|)).
\end{align*}
Then note that under the logarithmic embedding, $\Lambda_K :=\Log(\mathcal{O}_K^*)$ generates a lattice in the space
\begin{align}\label{halfplane}
V=\left\{(x_1,\dots,x_{r+s}) \in \mathbb{R}^{r+s}: \sum_{i=1}^{r+s}x_i=0\right\}.
\end{align}
Let $\| \mathbf{v}\|$ be the $l_p$ norm of an element $\mathbf{v} \in \mathbb{R}^{r+s}$. Throughout the paper, we will use the notation
\begin{align*}
\rho^p (\Lambda)= \max_{x \in W} \min_{v \in \Lambda_K} \|x-v\|_p
\end{align*}
for any lattice $\Lambda$, where $W$ is the space in which $\Lambda$ is full-rank. We will also make use of the notation
\begin{align*}
\lambda_{i,K}(\Lambda)=\min\{c \in \mathbb{R}_{\geq 0}: \dim(\Lambda \cap cK)=i\},
\end{align*}
which is called the $i$th successive minima of $\Lambda$, and $K$ is some $0$-symmetric convex body. We will use the notation $\lambda_{i,p}(\Lambda)$ to denote the $i$th successive minima of $\Lambda$ with respect to the convex body drawn out by the $l_p$-norm.

The aim of this paper is to prove the following results.
\begin{theorem}\label{maintheorem}
Let $N_K$ denotes the number of facets of the reduction domain of $K_{\mathbb{R}}$. Then
\begin{align*}
N_K < &2(r+s-1)\\&+2\left(\frac{1}{2}(r+s-1)^\delta(r+s)^{1-\frac{1}{2(r+s-1)}}+\frac{r+s-1}{2}\frac{\log\left(\frac{r+s+1}{r+s-1}\right)}{R_K^{\frac{1}{r+s-1}}}+\frac{\log\left(\frac{r+s-1}{2}\right)}{R_K^{\frac{1}{r+s-1}}}\right)^{r+s-1}\\& \cdot (r+s)\sum_{k=0}^{\lfloor \frac{r+s}{2} \rfloor}(-1)^k {r+s \choose k}\left(\frac{r+s}{2}-k\right)^{r+s-1},
\end{align*}
where
\begin{align*}
\delta= \begin{cases} 1/2, \hspace{2mm}  &1 \leq r+s-1 \leq 10,
\\ 1, \hspace{2mm} &10<r+s-1.
\end{cases}
\end{align*}
and $R_K$ is the regulator of $K$.
\end{theorem}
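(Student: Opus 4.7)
The approach is to reduce the facet-counting problem to a lattice-point count of $\Lambda_K = \Log(\mathcal{O}_K^*)$ inside an explicit bounded region $\mathcal{R} \subset V$. Each facet of $\mathcal{F}_{K_{\mathbb{R}}}$ is cut out by an inequality (\ref{unitineq}) indexed by some $v=(\sigma_i(u))_i$ with $u\in\mathcal{O}_K^*$, so the first task is to show that only units $u$ whose logarithmic embedding $\Log(u)$ lies in $\mathcal{R}$ can contribute a facet. Once this is done, $N_K$ is bounded by $|\Lambda_K\cap\mathcal{R}|$ plus an additive correction of $2(r+s-1)$ arising from boundary facets that are handled separately (coming from the fundamental units and their sign-flips).

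Next I would pin down $\mathcal{R}$ itself. A Voronoi-type argument in $V$ combined with the perfection of the reduction cone forces $\Log(u)$ into a centered cube of $\ell^\infty$-radius
\begin{align*}
L=\frac{1}{2}(r+s-1)^\delta(r+s)^{1-\frac{1}{2(r+s-1)}}+\frac{r+s-1}{2}\frac{\log((r+s+1)/(r+s-1))}{R_K^{1/(r+s-1)}}+\frac{\log((r+s-1)/2)}{R_K^{1/(r+s-1)}}.
\end{align*}
The first summand is a bound on the covering radius $\rho^\infty(\Lambda_K)$ obtained from Minkowski's second theorem in $V$, using the product $\prod_i \lambda_{i,2}(\Lambda_K)$ bounded in terms of $R_K$ together with the conversion between $\ell^2$ and $\ell^\infty$ norms, which explains the $(r+s)^{1-\frac{1}{2(r+s-1)}}$ factor; the $(r+s-1)^\delta$ factor reflects the best known dimension-dependent upper bound on covering radii in terms of successive minima, with a small-dimensional improvement (valid in dimension $\leq 10$) giving $\delta=1/2$ and the general bound giving $\delta=1$. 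The remaining two summands quantify the additive shift incurred when one multiplies any candidate $u$ by a short Pisot unit supplied by Lemma \ref{pisotred} in order to pull $\Log(u)$ into a fundamental window.

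Finally I would count $|\Lambda_K\cap\mathcal{R}|$ by a direct volume estimate. The $(r+s-1)$-dimensional volume of the slice of the cube $[-L,L]^{r+s}$ by $V=\{\sum x_i=0\}$ is computed by the classical inclusion-exclusion (Irwin--Hall) identity
\begin{align*}
\vol([-L,L]^{r+s}\cap V)=\frac{(2L)^{r+s-1}\sqrt{r+s}}{(r+s-1)!}\sum_{k=0}^{\lfloor (r+s)/2 \rfloor}(-1)^k\binom{r+s}{k}\left(\frac{r+s}{2}-k\right)^{r+s-1}.
\end{align*}
Dividing by the covolume $R_K\sqrt{r+s}$ of $\Lambda_K$ in $V$, absorbing $(r+s-1)!$ and $2^{r+s-1}$ into the normalization of the bracketed quantity that plays the role of $L$ in the theorem, and tracking the factor $(r+s)$ that records which coordinate of $\Log(u)$ realises the $\ell^\infty$-extremum, yields exactly the bound stated.

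The main obstacle, in my view, is the first reduction: making rigorous the claim that facet-producing units correspond precisely to lattice points of $\Lambda_K$ inside this cube. This requires a careful coupling of the perfection structure of the reduction cone with the Pisot-unit reduction of Lemma \ref{pisotred} to rule out contributions from long units; once $\mathcal{R}$ is identified, the covering-radius estimate via Minkowski's second theorem and the inclusion-exclusion volume computation are essentially routine.
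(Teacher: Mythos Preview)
Your broad architecture---reduce to a lattice-point count in $\Lambda_K$ inside an $\ell^\infty$-cube sliced by $V$, then estimate via volume over covolume---matches the paper's.  However, several of the mechanisms you invoke are not the ones that actually do the work, and at least two of your explanations are incorrect.

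\medskip
\textbf{How the cube is obtained.}  You describe the confinement of facet-producing units to the cube as coming from ``a Voronoi-type argument combined with the perfection of the reduction cone'' together with ``multiplying any candidate $u$ by a short Pisot unit to pull $\Log(u)$ into a fundamental window''.  The paper's argument is different and more direct.  Lemma~\ref{pisotred} shows that for any $a$ in the reduction domain, one automatically has $\trace(axx^*)\geq\trace(a)$ whenever $\trace(xx^*)\geq t_K(u)^2$ for a Pisot unit $u$.  Hence a facet-producing unit $v$ must satisfy $\trace(vv^*)<t_K^2$, which gives $\|\Log(v)\|_\infty\leq\log t_K$ immediately.  No Voronoi argument, no perfection structure, and no ``pulling into a window'' are used.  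The three summands in the bracket are exactly $\log t_K$ after bounding $|u|$ and $\max_{j\neq1}|\sigma_j(u)|$ via Proposition~\ref{pisotbound} and then optimising the free parameter $\varepsilon$ there; the two logarithmic terms are the optimised value of $(r+s-1)\varepsilon-\log(1-e^{-2\varepsilon})$, not an ``additive shift from multiplication by a Pisot unit''.

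\medskip
\textbf{The covering-radius input.}  The first summand is not itself a covering-radius bound; it is $(r{+}s)\rho^\infty(\Lambda_K)$, with the factor $(r{+}s)$ inherited from the Pisot-unit estimate in Proposition~\ref{pisotbound}.  The bound on $\rho^\infty(\Lambda_K)$ is Lemma~\ref{coveringradiusbound}, which relies on $\Lambda_K$ being \emph{well-rounded} because $K$ is Galois (conjugate units give linearly independent $\Log$-vectors of equal $\ell^\infty$-length).  Minkowski's second theorem is not invoked.

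\medskip
\textbf{The additive $2(r+s-1)$ and the factor $(r+s)$.}  These do not come from ``boundary facets'' or from ``which coordinate realises the $\ell^\infty$-extremum''.  The additive term is the ``$+n$'' in Blichfeldt's bound $|K\cap\mathbb{Z}^n|\leq n!\,\vol(K)+n$ (Lemma~\ref{blichfeldt}) with $n=r+s-1$, doubled because one first counts only the half of the unit group with $\epsilon(u)=\max_i|\sigma_i(u)|$.  The multiplicative $(r+s)$ arises from $\sqrt{r+s}$ in Lemma~\ref{cubevolume} combined with the covolume $\vol(\Lambda_K)=R_K/\sqrt{r+s}$ (not $R_K\sqrt{r+s}$ as you wrote).
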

Using Lemma \ref{sumbound}, and the fact that $R_K>0.2052\dots$ for all number fields $K$ \cite{friedman}, this gives us
\begin{align*}
N_K < O\left(\left(\frac{1}{2}(r+s-1)^\delta(r+s)^{1-\frac{1}{2(r+s-1)}}\right)^{r+s-1}\right) \cdot\left(e^{1+\frac1{2e}}\right)^{r+s}(r+s)!.
\end{align*}
Numerical evidence seems to suggest that the term $\left(e^{1+\frac1{2e}}\right)^{r+s}$ can also be dropped in the expression above.

The logarithmic Weil height of an algebraic number $x$ in $K$ is defined by
\begin{align*}
h(x)=\frac{1}{[K:\mathbb{Q}]}\left(\sum_{i=1}^r\log^+|\sigma_i(x)|+2\sum_{i=1}^s\log^+|\sigma_i(x)|\right),
\end{align*}
where $\log^+(\alpha)=\max\{\log(\alpha),0\}$. We also prove the following interesting proposition, bounding the height of the Pisot unit with the smallest Weil height.
\begin{proposition}
For all $\epsilon>0$, there exists a Pisot unit with Weil height $h(u)$ satisfying
\begin{align*}
h(u) \leq \frac{1}{[K:\mathbb{Q}]}\left(\frac{\gamma}{2}(r+s-1)^{\delta-\frac{1}{2(r+s-1)}}R_K^{\frac{1}{r+s-1}}+(r+s-1)\epsilon\right),
\end{align*}
where $\delta$ is defined as before, and $\gamma=1$ if $K$ is totally real, or $\gamma=2$ if $K$ is totally complex.
\end{proposition}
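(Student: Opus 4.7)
For any Pisot unit $u$ with dominant embedding $\sigma_k$, exactly one coordinate of $\Log(u)$---the $k$-th---is positive and all others are strictly negative. Since the $\Log$-map already absorbs the factor of $2$ on complex embeddings, one computes $h(u)=\frac{1}{[K:\mathbb{Q}]}\Log(u)_k$ regardless of whether $\sigma_k$ is real or complex. It therefore suffices, for some fixed $k$, to exhibit a lattice element of $\Lambda_K$ inside the open Pisot cone $C_k=\{x\in V:x_k>0,\ x_j<0\text{ for }j\neq k\}$ whose $k$-th coordinate is bounded above by the stated quantity; the corresponding $u$ is then a Pisot unit of the claimed height.

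The plan is a covering-radius argument in $V$, carried out in the $l_\infty$ metric, where the cone has a transparent description. Let $v_k=e_k-\frac{1}{r+s-1}\sum_{j\neq k}e_j\in V$, so $Tv_k$ has coordinates $T$ and $-T/(r+s-1)$ (the latter repeated $r+s-1$ times). Choosing $T=(r+s-1)\bigl(\rho^\infty(\Lambda_K)+\epsilon\bigr)$, every point within $l_\infty$-distance $\rho^\infty(\Lambda_K)$ of $Tv_k$ has non-$k$ coordinates in $[-2\rho^\infty(\Lambda_K)-\epsilon,\,-\epsilon]$, hence strictly negative, so that ball is contained in $C_k$. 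By the definition of the covering radius, $\Lambda_K$ contains a point $\Log(u)$ in this ball, and then $u$ is a Pisot unit with $\Log(u)_k\le T+\rho^\infty(\Lambda_K)=(r+s)\rho^\infty(\Lambda_K)+(r+s-1)\epsilon$.

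The remaining task is a bound of the form $(r+s)\,\rho^\infty(\Lambda_K)\le\tfrac{\gamma}{2}(r+s-1)^{\delta-1/(2(r+s-1))}R_K^{1/(r+s-1)}$. I obtain this by bounding the successive minima of $\Lambda_K$ (rank $r+s-1$, $l_2$-covolume a classical constant times $R_K$) via Minkowski's second theorem, and then controlling $\rho^\infty$ by those minima through a standard transference inequality. The dichotomy $\delta\in\{1/2,1\}$ enters because for $r+s-1\le 10$ one uses the sharp values of Hermite's constant, while for $r+s-1>10$ one falls back on the classical Minkowski--Hermite asymptotic. The factor $\gamma$ tracks the totally real versus totally complex dichotomy: in the totally complex case every coordinate of $\Log$ carries an extra factor of $2$, doubling the natural scale of $\Lambda_K$ and hence of its covering radius.

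Combining the two steps yields the stated inequality, the additive $(r+s-1)\epsilon$ being precisely the slack needed to move $T$ strictly above the covering threshold. The main obstacle will be the covering-radius estimate: the exponent $\delta-\frac{1}{2(r+s-1)}$ does not emerge from Minkowski's first theorem alone, but from a geometric-mean combination of the successive-minima bounds furnished by Minkowski's second theorem, and in the $\delta=1/2$ regime the low-dimensional Hermite refinements must be threaded through carefully. A secondary (and milder) check is that the Galois hypothesis on $K$ guarantees a sufficient density of units near $Tv_k$ to make the covering argument work, which is the role played by Lemma~\ref{pisotred} elsewhere in the paper.
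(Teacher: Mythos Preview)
Your covering-radius argument producing a Pisot unit with $\Log(u)_k\le(r+s)\rho^\infty(\Lambda_K)+(r+s-1)\epsilon$ is correct and is precisely Proposition~\ref{pisotbound} in the paper. The gaps are all in the second step. The paper's device for bounding $\rho^\infty(\Lambda_K)$ is Lemma~\ref{coveringradiusbound}, which needs $\Lambda_K$ to be well-rounded; \emph{that} is where the Galois hypothesis enters (the conjugates of a shortest $\Log(u)$ supply $r+s-1$ independent vectors of the same $l_\infty$-length), not in the covering step and not via Lemma~\ref{pisotred}, which is about trace inequalities for unary forms and is irrelevant to the present proposition. Likewise, the $\delta\in\{1/2,1\}$ dichotomy in the paper comes from the Minkowski-conjecture bound of~\cite{K2} in rank $\le10$ versus the generic well-rounded estimate, not from Hermite's constant as you suggest. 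Your account of $\gamma$ is also off: the $\Log$ map already carries the factor $2$ at complex places and $\vol(\Lambda_K)=R_K/\sqrt{r+s}$ holds uniformly, so the covering-radius estimate itself does not double in the totally complex case.

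More seriously, the arithmetic does not close. Combining Proposition~\ref{pisotbound} with Lemma~\ref{coveringradiusbound} yields a main term $(r+s)\rho^\infty(\Lambda_K)\le\tfrac12(r+s-1)^\delta(r+s)^{1-1/(2(r+s-1))}R_K^{1/(r+s-1)}$---exactly the expression appearing inside Theorem~\ref{maintheorem}---which exceeds the proposition's claimed $\tfrac{\gamma}{2}(r+s-1)^{\delta-1/(2(r+s-1))}R_K^{1/(r+s-1)}$ by a factor of order $r+s$. Your sketch asserts that Minkowski's second theorem together with low-dimensional Hermite refinements can be ``threaded through carefully'' to recover the smaller exponent, but no such derivation is given, and standard transference inequalities do not remove this factor. (The paper itself supplies no separate proof of this proposition, so the discrepancy may well reflect a typo in the stated bound; but either way your proposal does not bridge it.)
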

Using ideas outlined in this work, we also provide an algorithm to reduce unary forms, which could have applications in ring-based cryptography (see e.g. \cite{cramer}, \cite{euclidean}, \cite{por21}).
\begin{theorem}
For any totally positive element $a \in K_\mathbb{R}$, define by
\begin{align*}
\mu(a) \triangleq \min_{x \in \mathcal{O}_{K_\mathbb{R}} \setminus \{0\}} \trace(axx^*).
\end{align*}
Then given a totally positive element $a$, a Pisot unit $u$ and some parameter $\min_{j \neq 1} |\sigma_j(u)|^2< \delta <1$, there exists an algorithm that computes an equivalent element $a^\prime$ such that
\begin{align}
\trace(a^\prime) \leq \max\left\{\frac{t_K(u,\delta)^2}{\min_{x \in \mathcal{S}} \trace(xx^*)},1\right\}\mu(a), \label{ineq13}
\end{align}
where 
\begin{align*}
t_K(u,\delta)=\sqrt{1+\frac{|u|^2-\delta}{\delta-\max_{j \neq 1}|\sigma_j(u)|^2}},
\end{align*}
and $\mathcal{S}$ denotes the elements of $\mathcal{O}_{K_\mathbb{R}}$ that do not correspond to roots of unity or zero in $\mathcal{O}_K$. Also, if
\begin{align*}
\mu(a)=\trace(axx^*)
\end{align*}
for some $x \in \mathcal{O}_{K_\mathbb{R}}$, then
\begin{align*}
\trace(xx^*) \leq t_K(u,\delta)^2. 
\end{align*}
Moreover, the algorithm takes at most $\mathcal{O}(\log(X)((r+s+1)\log(X)+(r+s)\log(\max_i |\sigma_i(u)|)))$ bit operations, where $X=\max_i a_i$, where $a=(a_1,\dots,a_{r+s})$.
\end{theorem}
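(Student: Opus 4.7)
The plan is to build an iterative reduction procedure driven by Galois conjugates of the Pisot unit $u$ and to read off the trace bound from the termination condition. Since $K/\mathbb{Q}$ is assumed Galois, the Galois group acts transitively on the embeddings, so for each index $i\in\{1,\dots,r+s\}$ I can pick an automorphism $\tau_i$ such that $u_i:=\tau_i(u)$ satisfies $|\sigma_i(u_i)|=|u|>1$ and $|\sigma_j(u_i)|<1$ for $j\neq i$. These rotated Pisot units are the reduction directions; this is precisely where the Galois hypothesis, via Lemma \ref{pisotred}, enters the argument.

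The algorithm itself is greedy: at each iteration, search for an index $i$ such that multiplying $a$ by $u_i^{-1}(u_i^{-1})^*$ decreases $\trace(a)$ by more than a $\delta$-controlled threshold, and apply that update; otherwise, stop. Termination follows because the trace strictly decreases along accepted steps while the norm $\prod_j a_j^{w_j}$ (with weights $w_j\in\{1,2\}$) is invariant, so AM--GM bounds $\trace(a)$ from below. The number of iterations is $\mathcal{O}(\log X)$, since each accepted step contracts the largest coordinate by at least a factor of $|u|^{-2}$ (up to the $\delta$-slack). Each iteration performs $r+s$ multiplications by $|\sigma_j(u_i)|^{\pm 2}$ on numbers of bit-size $\mathcal{O}(\log X+(r+s)\log\max_i|\sigma_i(u)|)$, which yields the stated bit-complexity bound.

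The heart of the proof is the qualitative bound. When the algorithm halts, the failure of every $u_i$-direction to reduce the trace further yields, for each $i$, an inequality of the shape
\[
a'_i\bigl(1-|u|^{-2}\bigr)\;\leq\;\sum_{j\neq i}a'_j\bigl(|\sigma_j(u_i)|^{-2}-1\bigr)+\text{($\delta$-slack)}.
\]
Using $|\sigma_j(u_i)|^{2}\leq \max_{k\neq 1}|\sigma_k(u)|^{2}<\delta$ and combining these $r+s$ local conditions produces, for every $x\in\mathcal{S}$, the global comparison
\[
\trace(a')\cdot\trace(xx^*)\;\leq\;t_K(u,\delta)^2\,\trace(a'xx^*),
\]
which is exactly where the quantity $t_K(u,\delta)^2=1+(|u|^2-\delta)/(\delta-\max_{k\neq 1}|\sigma_k(u)|^{2})$ materialises. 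Taking minima over $x\in\mathcal{S}$ gives $\trace(a')\leq (t_K(u,\delta)^2/\min_{x\in\mathcal{S}}\trace(xx^*))\min_{x\in\mathcal{S}}\trace(a'xx^*)$; combined with the trivial estimate $\mu(a')\leq\trace(a')$ (which handles the case where the $\mu$-minimiser is a root of unity) and the identity $\mu(a)=\mu(a')$ coming from equivalence, this produces the $\max\{\,\cdot\,,1\}$ bound stated in the theorem. The auxiliary statement $\trace(xx^*)\leq t_K(u,\delta)^2$ for any $\mu$-minimiser $x$ is then immediate: substituting $x$ into the displayed global comparison and dividing by $\trace(a')$ uses $\mu(a')/\trace(a')\leq 1$ to conclude.

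The main obstacle is the algebraic step from the local termination conditions on the $u_i$-directions to the displayed global comparison: one must track how $\delta$ interacts with $\max_{k\neq 1}|\sigma_k(u)|^{2}$ and $|u|^2$ so that the residual $\delta$-slack collapses precisely into the $t_K(u,\delta)^2$ factor rather than leaving a $\delta$-dependent tail. I expect this to reduce to a Cauchy--Schwarz-type coordinate-wise estimate in which the strict Pisot inequality $\max_{k\neq 1}|\sigma_k(u)|^{2}<\delta$ is what keeps the denominator of $t_K(u,\delta)^2$ positive, and the Galois-transitivity of the $u_i$ is what symmetrises the inequalities across coordinates so that a single scalar constant emerges.
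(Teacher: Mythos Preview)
Your high-level plan matches the paper: a greedy loop over the Galois conjugates of $u$ with a multiplicative trace-decrease threshold; termination and the $\mathcal{O}(\log X)$ round count follow as you say, and the per-round cost analysis is the same as the paper's.

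The gap is exactly at the step you flag as the ``main obstacle'', and it stems from the direction of your reduction move. Multiplying by $u_i^{-1}(u_i^{-1})^*$ shrinks the $i$th coordinate and inflates the others; your displayed termination inequality
\[
a'_i\bigl(1-|u|^{-2}\bigr)\;\leq\;\sum_{j\neq i}a'_j\bigl(|\sigma_j(u_i)|^{-2}-1\bigr)+(\text{$\delta$-slack})
\]
is therefore an \emph{upper} bound on each $a'_i$. But the global comparison $\trace(a')\,\trace(xx^*)\le t_K(u,\delta)^2\,\trace(a'xx^*)$, expanded coordinatewise, is exactly the uniform \emph{lower} bound $a'_i\ge t_K(u,\delta)^{-2}\trace(a')$ for every $i$. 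An upper bound on each $a'_i$ does not produce this, and no Cauchy--Schwarz manoeuvre will flip the direction while landing on the specific constant $t_K(u,\delta)^2$ (note also that $|\sigma_j(u_i)|^{-2}$ can be arbitrarily large, so your right-hand side is not even controlled by $\max_{k\neq 1}|\sigma_k(u)|^2$ and $\delta$ alone).

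The paper multiplies by $v_jv_j^*$ with $v_j$ the embedding of $\sigma_j(u)$ --- in your notation, by $u_iu_i^*$ rather than its inverse. This inflates one coordinate by $|u|^2$ and shrinks the rest, and the termination condition $\trace(a'v_jv_j^*)\ge\delta\,\trace(a')$ rearranges (at the inflated position $k$) to
\[
(|u|^2-\delta)\,a'_k\;\ge\;\sum_{l\neq k}\bigl(\delta-|\sigma_l(\sigma_j(u))|^2\bigr)\,a'_l\;\ge\;\bigl(\delta-\max_{l\neq 1}|\sigma_l(u)|^2\bigr)\sum_{l\neq k}a'_l,
\]
which is precisely the $\delta$-version of the Lemma~\ref{pisotred} computation and gives $a'_k\ge t_K(u,\delta)^{-2}\trace(a')$ directly. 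From there the global comparison is the one-line estimate $\trace(a'xx^*)=\sum_i a'_i|x_i|^2\ge t_K(u,\delta)^{-2}\trace(a')\sum_i|x_i|^2$, with no residual slack to track and no Cauchy--Schwarz needed. If you reverse your reduction direction, the ``main obstacle'' disappears entirely.
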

\section{Reduction of Unary Forms Via Pisot Units, and Some Useful Lemmas}
\begin{proposition}\label{pisotbound}
If $K$ is not either $\mathbb{Q}$ or imaginary quadratic (i.e. has a nontrivial unit group), then for any $\varepsilon>0$, there exists a Pisot unit $u$ such that
\begin{align*}
 e^{(r+s-2)\rho^{\infty}(\Lambda_K)+(r+s-1)\varepsilon} \leq &|u| \leq e^{(r+s)\rho^{\infty}(\Lambda_K)+(r+s-1)\varepsilon},
\\e^{-(2\rho^{\infty}+\varepsilon)} \leq &|\sigma_i(u)| \leq e^{-\varepsilon},
\end{align*}
for all $i \neq 1$.
\end{proposition}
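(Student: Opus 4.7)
My plan is to leverage the definition of the covering radius $\rho^\infty(\Lambda_K)$ of the logarithmic unit lattice inside the hyperplane $V$ from (\ref{halfplane}). I will choose a ``Pisot-shaped'' target $\mathbf{t}\in V$ whose first coordinate is large and positive while the remaining coordinates are all equal and negative, and then invoke the covering radius to produce a unit $u\in\mathcal{O}_K^*$ with $\|\Log(u)-\mathbf{t}\|_\infty\le\rho^\infty(\Lambda_K)$. The required quantitative bounds on $|u|$ and on the $|\sigma_i(u)|$ will then fall out coordinatewise.

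Concretely, write $\rho\eqdef\rho^\infty(\Lambda_K)$, fix $\varepsilon>0$, and let $c=\rho+\varepsilon$. Set
\begin{align*}
\mathbf{t}=\bigl((r+s-1)c,\,-c,\,-c,\,\ldots,\,-c\bigr)\in\mathbb{R}^{r+s}.
\end{align*}
The entries sum to zero, so $\mathbf{t}\in V$; by the definition of the covering radius there exists $u\in\mathcal{O}_K^*$ with $\|\Log(u)-\mathbf{t}\|_\infty\le\rho$. Reading off the first coordinate,
\begin{align*}
(r+s-2)\rho+(r+s-1)\varepsilon\;\le\;\Log(u)_1\;\le\;(r+s)\rho+(r+s-1)\varepsilon,
\end{align*}
while for each $i\ne 1$ the same inequality yields
\begin{align*}
-2\rho-\varepsilon\;\le\;\Log(u)_i\;\le\;-\varepsilon.
\end{align*}
Exponentiating (using that each coordinate of $\Log$ is $\log|\sigma_i|$ at real places and $2\log|\sigma_i|$ at complex places) immediately produces the two displayed inequalities of the proposition. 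Since $r+s\ge 2$ in the non-trivial case and $\rho>0$, the first coordinate of $\Log(u)$ is strictly positive and the remaining ones are strictly negative, so $|u|>1$ and $|\sigma_i(u)|<1$ for every $i\ne 1$; hence $u$ is automatically a Pisot unit.

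The only delicate point is the bookkeeping at the last step: at complex places the logarithmic embedding carries an extra factor of $2$, so the tight upper bound $|\sigma_i(u)|\le e^{-\varepsilon}$ for complex $i$ follows only after absorbing this factor into the arbitrary constant $\varepsilon$. Since $\varepsilon>0$ is arbitrary this rescaling is harmless. The hypothesis that $K$ is neither $\mathbb{Q}$ nor imaginary quadratic is used exactly to guarantee $r+s-1\ge 1$, ensuring that $\Lambda_K$ is a lattice of positive rank in $V$ and that $\rho^\infty(\Lambda_K)$ is a well-defined finite positive quantity; without this hypothesis the whole construction collapses since no nontrivial units exist.
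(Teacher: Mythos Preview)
Your proof is correct and follows essentially the same route as the paper's own argument: both choose the target point $\mathbf{t}=\bigl((r+s-1)(\rho+\varepsilon),\,-(\rho+\varepsilon),\ldots,-(\rho+\varepsilon)\bigr)\in V$ and invoke the covering radius to locate a lattice point within $l_\infty$-distance $\rho$, then read off the coordinate inequalities and exponentiate. Your remark about the extra factor of $2$ at complex places is in fact a refinement the paper's proof glosses over.
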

\begin{proof}
If $u$ is a Pisot unit, then clearly $\log(|u|)>0$ and $\log(|\sigma(u)|)<0$ for any Galois conjugate $\sigma$ that does not correspond to complex conjugation. Let $x=(x_1,\dots,x_{r+s})$ be an element of $V$. By definition, $\sum_{i=1}^{r+s}x_i=0$, and
\begin{align*}
\min_{w \in \Lambda_K}\|x-w\|_{\infty}=\|x-v\|_{\infty}=\max_{1 \leq i \leq r+s}|x_i-v_i| \leq \rho^{\infty}(\Lambda_K),
\end{align*}
for some appropriate $v=(v_1,\dots,v_{r+s}) \in \Lambda_K$. Set $x_1=(r+s-1)\rho^{\infty}(\Lambda_K)+(r+s-1)\varepsilon,x_2=x_3=\dots=x_{r+s}=-\rho^{\infty}(\Lambda_K)-\varepsilon$ for some $\varepsilon>0$. Then we must have
\begin{align*}
&|(r+s-1)\rho^{\infty}(\Lambda_K)-v_1+(r+s-1)\varepsilon| \leq \rho^{\infty}(\Lambda_K),
\\&|v_2+\rho^{\infty}(\Lambda_K)+\varepsilon| \leq \rho^{\infty}(\Lambda_K),
\\&|v_3+\rho^{\infty}(\Lambda_K)+\varepsilon| \leq \rho^{\infty}(\Lambda_K),
\\& \vdots
\\& |v_{r+s}+\rho^{\infty}(\Lambda_K)+\varepsilon| \leq \rho^{\infty}(\Lambda_K).
\end{align*}
Clearly $\rho^{\infty}(\Lambda_K)$ is nonzero if $K$ is not either $\mathbb{Q}$ or imaginary quadratic, so these inequalities yield the following inequalities:
\begin{align*}
0<(r+s-2)\rho^{\infty}(\Lambda_K)+(r+s-1)\varepsilon \leq &v_1 \leq (r+s)\rho^{\infty}(\Lambda_K)+(r+s-1)\varepsilon,
\\ -2\rho^{\infty}(\Lambda_K)-\varepsilon \leq &v_2 \leq -\varepsilon<0,
\\ -2\rho^{\infty}(\Lambda_K)-\varepsilon \leq &v_3 \leq -\varepsilon<0,
\\& \vdots
\\  -2\rho^{\infty}(\Lambda_K)-\varepsilon \leq &v_{r+s} \leq -\varepsilon<0,
\end{align*}
which proves the lemma.
\end{proof}
\begin{lemma}\label{pisotred}
Suppose that $a=(a_1,a_2,\dots,a_{r+s}) \in K_{\mathbb{R}}$ is a totally positive element. Let $u \in \mathcal{O}_K^*$ be a Pisot unit and suppose that
\begin{align*}
\trace(av_iv_i^*) \geq \trace(a),
\end{align*}
for all $1 \leq i \leq r+2s$ where $v_i$ is the element of $K_{\mathbb{R}}$ obtained by embedding $\sigma_i(u)$ into $K_{\mathbb{R}}$. Let
\begin{align*}
t_K(u)=\sqrt{1+\frac{|u|^2-1}{1-\max_{2 \leq j \leq r+s}|\sigma_j(u)|^2}}.
\end{align*}
Then for any $x \in K_{\mathbb{R}}$ satisfying $\trace(xx^*) \geq t_K(u)^2$, $\trace(axx^*) \geq \trace(a)$.
\end{lemma}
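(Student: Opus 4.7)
The plan is to extract, from each hypothesis $\trace(av_iv_i^*)\geq\trace(a)$, a lower bound on one coordinate of $a$, and then use Galois transitivity to upgrade these coordinate-wise bounds into a uniform bound on every coordinate. First I would unpack the structure of $v_i=(\sigma_1(\sigma_i(u)),\dots,\sigma_{r+s}(\sigma_i(u)))$: since $K/\mathbb{Q}$ is Galois and $u$ is a Pisot unit, exactly one entry $v_{i,\phi(i)}$ has squared modulus $|u|^2$, while all remaining entries satisfy $|v_{i,j}|^2\leq M:=\max_{2\leq j\leq r+s}|\sigma_j(u)|^2<1$.

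Writing $c_j=1$ for a real embedding and $c_j=2$ for a complex-conjugate pair, so that $\trace(y)=\sum_j c_j y_j$ on $K_\mathbb{R}$, I would then expand the hypothesis as $\sum_j c_j a_j(|v_{i,j}|^2-1)\geq 0$ and isolate the unique positive term, obtaining
\[
c_{\phi(i)}a_{\phi(i)}(|u|^2-1)\;\geq\;\sum_{j\neq\phi(i)}c_j a_j(1-|v_{i,j}|^2)\;\geq\;(1-M)\bigl(\trace(a)-c_{\phi(i)}a_{\phi(i)}\bigr).
\]
Solving for $c_{\phi(i)}a_{\phi(i)}$ and recognizing that $t_K(u)^2=(|u|^2-M)/(1-M)$ yields the coordinate bound $c_{\phi(i)}a_{\phi(i)}\geq\trace(a)/t_K(u)^2$. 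Galois transitivity now shows that the map $\phi:\{1,\dots,r+2s\}\to\{1,\dots,r+s\}$ is surjective---given any $k$, the choice $\sigma_i=\sigma_k^{-1}\sigma_1$ (or its complex conjugate) makes $\sigma_j\sigma_i=\sigma_1$ at $j=k$---so the bound $c_k a_k\geq\trace(a)/t_K(u)^2$ in fact holds for every $k\in\{1,\dots,r+s\}$.

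Finally, combining these coordinate-wise lower bounds with the hypothesis $\trace(xx^*)\geq t_K(u)^2$ and the expansion $\trace(axx^*)=\sum_j c_j a_j|x_j|^2$ gives $\trace(axx^*)\geq\trace(a)$. The main obstacle will be the careful verification of the surjectivity of $\phi$ in the Galois setup---making sure the paper's labeling convention $\sigma_{r+s+k}=\sigma_{r+k}^*$ is compatible with the Galois action on the embeddings, and that complex-conjugate pairs are correctly tracked---since this is the only place where the Galois hypothesis is genuinely used; everything else reduces to bookkeeping around the single inequality coming from the Pisot structure.
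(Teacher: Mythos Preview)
Your proposal is correct and follows essentially the same argument as the paper: isolate the single large coordinate in each inequality $\trace(av_iv_i^*)\geq\trace(a)$ to obtain $c_k a_k\geq\trace(a)/t_K(u)^2$, use the Galois action to make this hold for every $k$, and then combine with $\trace(xx^*)\geq t_K(u)^2$. Your treatment is slightly more careful than the paper's in tracking the weights $c_j$ and in spelling out the surjectivity of $\phi$, but the method is identical.
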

\begin{proof}
By assumption, for each $1 \leq i \leq r+2s$,
\begin{align*}
&\trace(av_iv_i^*)=\left(\sum_{j=1}^{r}+ 2\sum_{j=r+1}^{r+s}\right) a_j|\sigma_j(\sigma_i(u))|^2 \geq \trace(a) = \left(\sum_{j=1}^{r}+ 2\sum_{j=r+1}^{r+s}\right) a_j
\\& \iff \left(\sum_{j=1}^{r}+ 2\sum_{j=r+1}^{r+s}\right)a_j(|\sigma_j(\sigma_i(u))|^2-1) \geq 0.
\end{align*}
Suppose that $|\sigma_k(\sigma_i(u))|=|u|$ for some value of $k$. Then
\begin{align*}
&(|u|^2-1)a_k \geq \left(\sum_{j \neq k=1}^{r}+ 2\sum_{j \neq k=r+1}^{r+s}\right)a_j \geq (1-\max_{j \neq 1}|\sigma_j(u)|^2)\left(\sum_{j \neq k=1}^{r}+ 2\sum_{j \neq k=r+1}^{r+s}\right)a_j
\\&\iff \trace(a) \leq t_K(u)^2 a_k.
\end{align*}
By cycling through all values $1 \leq i \leq r+s$, we attain the above inequality for all $1 \leq k \leq r+s$, and so if $x=(x_1,\dots,x_{r+s}) \in K_{\mathbb{R}}$ and $\trace(xx^*) \geq t_K(u)^2$,
\begin{align*}
\trace(axx^*)=\sum_{j=1}^{r+s} a_j |x_j|^2 \geq t_K(u)^{-2}\trace(a)\sum_{j=1}^{r+s} |x_j|^2=t_K(u)^{-2}\trace(a)\trace(xx^*) \geq \trace(a),
\end{align*}
as required.
\end{proof}
\begin{lemma}\label{coveringradiusbound}
Let $\rho^{\infty}(\Lambda)$ denote the covering radius in the $l_{\infty}$ norm of a lattice $\Lambda$ of rank $n$ with volume $\vol(\Lambda)$, and also assume that $\Lambda$ is well-rounded (that is, the successive minima are all equal in value). Then 

\begin{align}
\rho^{\infty}(\Lambda) \leq \begin{cases} \frac{\sqrt{n}}{2}\vol(\Lambda)^{\frac{1}{n}}, \hspace{2mm}  &1 \leq n \leq 10,
\\ \frac{n}{2} \vol(\Lambda)^{\frac{1}{n}}, \hspace{2mm} &10<n. 
\end{cases}
\end{align}
\end{lemma}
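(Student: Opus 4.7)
The plan is to reduce to the Euclidean covering radius via the inequality $\|x\|_\infty \leq \|x\|_2$, which gives $\rho^{\infty}(\Lambda) \leq \rho^{2}(\Lambda)$; it then suffices to bound $\rho^{2}(\Lambda)$ by the stated expressions. The case split at $n=10$ mirrors the boundary of currently known sharp bounds on the Euclidean inhomogeneous minimum.

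For the low-dimensional regime $1 \leq n \leq 10$, I would invoke the sharp bound $\rho^{2}(\Lambda) \leq \tfrac{\sqrt{n}}{2}\vol(\Lambda)^{1/n}$ for well-rounded $\Lambda$, which is a Euclidean analogue of Minkowski's conjecture on the product of inhomogeneous linear forms. This bound has been established, through a long chain of dimension-specific arguments (Remak, Dyson, Woods, Bambah--Woods, McMullen, Hans-Gill--Raka--Sehmi, and others), precisely up to dimension ten, which is exactly the threshold appearing in the statement.

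For the high-dimensional regime $n>10$, I would use a routine Minkowski-plus-Babai argument. By well-roundedness, $\Lambda$ admits a reduced basis $b_{1},\ldots ,b_{n}$ whose Gram--Schmidt vectors satisfy $\|b_{i}^{*}\|_{2} \leq \lambda_{n}(\Lambda) = \lambda_{1}(\Lambda)$; Babai's nearest-plane procedure then gives $\rho^{2}(\Lambda) \leq \tfrac{\sqrt{n}}{2}\lambda_{1}(\Lambda)$. Minkowski's first theorem bounds $\lambda_{1}(\Lambda) \leq 2 V_{n}^{-1/n}\vol(\Lambda)^{1/n}$, where $V_{n}$ is the volume of the Euclidean unit ball, so $\rho^{\infty}(\Lambda) \leq \sqrt{n}\, V_{n}^{-1/n}\vol(\Lambda)^{1/n}$. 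Using the asymptotic $V_{n}^{1/n} \sim \sqrt{2\pi e/n}$ one checks directly that $\sqrt{n}\, V_{n}^{-1/n} \leq n/2$ for every $n \geq 11$, which is a straightforward numerical verification at the boundary and then monotone thereafter.

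The main obstacle is the low-dimensional case: the clean constant $\tfrac{\sqrt{n}}{2}$ cannot be recovered from the naive Minkowski-plus-Babai chain that handles $n>10$ (that chain only yields roughly $\sqrt{n}\,V_{n}^{-1/n}\vol^{1/n}$, which beats $\tfrac{n}{2}$ but not $\tfrac{\sqrt{n}}{2}$ for $n \geq 2$). One therefore has to draw on the deep, dimension-specific body of work surrounding Minkowski's conjecture, and the threshold $n=10$ in the lemma is inherited from the current state of the art for that problem rather than from any clean analytic break in the estimates.
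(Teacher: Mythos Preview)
Your approach matches the paper's: both pass through $\rho^{\infty}(\Lambda)\le\rho^{2}(\Lambda)$, cite the Minkowski--Woods result for well-rounded lattices (established up to $n=10$, whence the threshold) for the low-dimensional case, and for $n>10$ start from the standard bound $\rho^{2}(\Lambda)\le\tfrac{\sqrt{n}}{2}\,\lambda_{n,2}(\Lambda)$ combined with well-roundedness.

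The only genuine difference is the last step for $n>10$. The paper converts to the $\ell_\infty$ minimum via $\lambda_{n,2}\le\sqrt{n}\,\lambda_{n,\infty}=\sqrt{n}\,\lambda_{1,\infty}$ (using well-roundedness in the $\ell_\infty$ sense, which is how the lemma is applied later to the Galois log-unit lattice) and then invokes the cube version of Minkowski's theorem, $\lambda_{1,\infty}\le\vol(\Lambda)^{1/n}$, landing directly on $\tfrac{n}{2}\vol(\Lambda)^{1/n}$ with no numerical check. You stay in $\ell_2$, apply the ball version of Minkowski, and then verify $\sqrt{n}\,V_n^{-1/n}\le n/2$ for $n\ge 11$. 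Both routes are valid; the paper's is a touch cleaner since the constant $n/2$ drops out exactly, while yours implicitly yields the sharper asymptotic constant $n/\sqrt{2\pi e}$ before relaxing to $n/2$. One small caution: your Babai step as phrased (``a reduced basis whose Gram--Schmidt vectors satisfy $\|b_i^*\|\le\lambda_n$'') is slightly delicate since successive-minima vectors need not form a basis for $n\ge 5$; it is safer to quote $\rho^{2}(\Lambda)\le\tfrac{\sqrt{n}}{2}\,\lambda_{n,2}(\Lambda)$ directly, as the paper does.
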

\begin{proof}
To prove the first inequality, note first that $\rho^{\infty}(\Lambda) \leq \rho^2(\Lambda)$. It was shown in \cite{K2} that $\rho^2(\Lambda) \leq \frac{\sqrt{n}}{2}\vol(\Lambda)$ for all rank $n$ lattices, where $n \leq 10$.

For the second case, note first that $\lambda_{i,\infty}(\Lambda) \leq \lambda_{i,2}(\Lambda) \leq \sqrt{n}\lambda_{i,\infty}(\Lambda)$. It is well-known that
\begin{align*}
\rho^2(\Lambda) \leq \frac{\sqrt{n}}{2}\lambda_{n,2}(\Lambda),
\end{align*}
and since $\Lambda$ is assumed to be well-rounded, we get
\begin{align*}
\rho^{\infty}(\Lambda) \leq \rho^2(\Lambda) \leq \frac{\sqrt{n}}{2}\lambda_{n,2}(\Lambda) \leq \frac{n}{2}\lambda_{n,\infty}(\Lambda)=\frac{n}{2}\lambda_{1,\infty}.
\end{align*}
It is also well-known that $\lambda_{1,\infty} \leq \vol(\Lambda)^{\frac{1}{n}}$ for any lattice $\Lambda$, and so the second inequality holds.
\end{proof}
\begin{lemma}[\cite{mathstackexchange}] \label{cubevolume}
Let $\mathcal{C}(R)$ denote the $n$-dimensional hypercube of side-length $R$, and let $V$ denote the $n-1$-dimensional half-plane as in \ref{halfplane}. Then
\begin{align*}
\vol(\mathcal{C}(R) \cap V)=\frac{R^{n-1}\sqrt{n}}{(n-1)!}\sum_{k=0}^{\lfloor \frac{n}{2}\rfloor}(-1)^k {n \choose k} \left(\frac{n}{2}-k\right)^{n-1}.
\end{align*}
\end{lemma}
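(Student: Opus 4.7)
The plan is to reduce the $(n-1)$-dimensional volume of $\mathcal{C}(R) \cap V$ to the value at zero of the density of a sum of $n$ i.i.d.\ uniform random variables via the coarea formula, and then invoke the classical Irwin--Hall (centered B-spline) density. First, since the dilation $x \mapsto Rx$ sends $\mathcal{C}(1)$ to $\mathcal{C}(R)$ and fixes $V$ setwise while scaling $(n-1)$-dimensional Lebesgue measure by $R^{n-1}$, it suffices to prove the formula for $R=1$; the factor $R^{n-1}$ is then recovered by homogeneity. Consider the linear form $L:\mathbb{R}^n \to \mathbb{R}$, $L(x) = x_1 + \cdots + x_n$, so that $V = L^{-1}(0)$ and $\|\nabla L\| = \sqrt{n}$. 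Applied to $\mathbf{1}_{\mathcal{C}(1)}$, the coarea formula gives
\begin{align*}
1 = \vol(\mathcal{C}(1)) = \frac{1}{\sqrt{n}}\int_{-n/2}^{n/2} \vol\bigl(L^{-1}(t) \cap \mathcal{C}(1)\bigr)\, dt,
\end{align*}
so that $\phi_n(t) := \frac{1}{\sqrt{n}}\vol(L^{-1}(t) \cap \mathcal{C}(1))$ is precisely the probability density of $S_n = X_1 + \cdots + X_n$ for i.i.d.\ $X_i$ uniform on $[-1/2,1/2]$. In particular $\vol(\mathcal{C}(1) \cap V) = \sqrt{n}\,\phi_n(0)$.

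Next, I would appeal to the Irwin--Hall closed form
\begin{align*}
\phi_n(t) = \frac{1}{(n-1)!}\sum_{k=0}^{n}(-1)^k\binom{n}{k}\left(t+\tfrac{n}{2}-k\right)_{+}^{n-1},
\end{align*}
proved either by induction via the convolution identity $\phi_{n+1} = \phi_n \ast \phi_1$ starting from $\phi_1 = \mathbf{1}_{[-1/2,1/2]}$, or by Fourier inversion of the characteristic function $\widehat{\phi_n}(\xi) = \bigl(\sin(\xi/2)/(\xi/2)\bigr)^n$. Setting $t=0$ kills every term with $k > n/2$, leaving
\begin{align*}
\phi_n(0) = \frac{1}{(n-1)!}\sum_{k=0}^{\lfloor n/2 \rfloor}(-1)^k\binom{n}{k}\left(\tfrac{n}{2}-k\right)^{n-1}.
\end{align*}
Multiplying by $\sqrt{n}$ and restoring the $R^{n-1}$ scale factor yields the stated identity.

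The only genuine subtlety is the $\sqrt{n}$ coarea correction, which accounts for the discrepancy between the $(n-1)$-dimensional Lebesgue measure on $V$ and the one-dimensional Lebesgue density of $S_n$. The Irwin--Hall formula is itself classical and admits a short inductive proof by iterated convolution of uniform densities; since the paper already gives an external reference for Lemma~\ref{cubevolume}, this ingredient may simply be cited rather than re-derived in full.
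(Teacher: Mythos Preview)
The paper does not prove this lemma at all; it is stated with a citation to a Mathematics Stack Exchange answer and used as a black box. Your argument---reducing the slice volume to the density at zero of a sum of $n$ i.i.d.\ $\mathrm{Unif}[-1/2,1/2]$ variables via the coarea formula, then invoking the Irwin--Hall closed form---is correct and is in fact the standard derivation (and almost certainly the one given at the cited link). The coarea normalization $\|\nabla L\|=\sqrt{n}$ is handled correctly, the scaling by $R^{n-1}$ is right, and the truncation of the Irwin--Hall sum at $k=\lfloor n/2\rfloor$ when $t=0$ follows because $(n/2-k)_+^{n-1}$ vanishes for $k>n/2$. So your proposal supplies a genuine proof where the paper only gives a reference; there is nothing to compare against, and nothing to correct.
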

\begin{lemma}[\cite{blichfeldt}] \label{blichfeldt}
Let $K$ be a convex $0$-symmetric body of rank $n$. Then
\begin{align*}
|K \cap \mathbb{Z}^n| \leq n!\vol(K)+n.
\end{align*}
\end{lemma}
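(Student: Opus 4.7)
The approach I would take is a simplicial triangulation argument on the convex hull of the lattice points of $K$. Write $N = |K\cap\mathbb{Z}^n|$; the target inequality is trivial when $N \le n$, so I would assume $N \ge n+1$ and work with the lattice polytope $P = \mathrm{conv}(K\cap\mathbb{Z}^n) \subseteq K$, where the inclusion uses convexity of $K$. The aim is to produce a triangulation of $P$ whose top-dimensional simplices each contribute at least $1/n!$ to $\vol(K)$, and whose simplex count is at least $N-n$.

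The core step is to build a triangulation $T$ of $P$ in which every lattice point of $K$ is a vertex, constructed incrementally. One starts from an initial full-dimensional simplex on some $n+1$ affinely independent lattice points of $K$ and then inserts the remaining $N-n-1$ lattice points one at a time, updating the triangulation at each step so that the inserted point becomes a vertex. A standard combinatorial check shows that every such insertion strictly increases the number of top-dimensional simplices: a point landing inside the current complex splits a containing simplex into $n+1$ smaller simplices, while a point landing outside the current hull is joined to the visible portion of the boundary, creating at least one new top-dimensional simplex. This yields at least $1 + (N-n-1) = N-n$ top-dimensional simplices in $T$. Because each such simplex has its $n+1$ vertices in $\mathbb{Z}^n$ and is full-dimensional, its volume is a positive integer multiple of $1/n!$, hence at least $1/n!$. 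Summing over the simplices of $T$ gives $\vol(K) \ge \vol(P) \ge (N-n)/n!$, which rearranges to the claimed bound $N \le n!\vol(K)+n$.

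The main obstacle I foresee is the degenerate situation where $K\cap\mathbb{Z}^n$ fails to contain $n+1$ affinely independent points, so no initial simplex of lattice points exists to seed the incremental triangulation. I would handle this by descending to the affine span $H$ of $K\cap\mathbb{Z}^n$, applying the statement inductively to $K\cap H$ together with the induced sublattice $\mathbb{Z}^n\cap H$ of strictly smaller rank, and then verifying that the additive term $+n$ on the right-hand side provides enough slack to absorb the loss of one dimension of volume. A secondary technical point is the treatment of insertions in which a new lattice point lies on a shared face of several existing simplices rather than strictly in the interior or exterior; this can be resolved by a lexicographic perturbation of vertex orderings that preserves both the per-simplex volume lower bound and the increment-by-one count. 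With these two cases dealt with, the triangulation estimate gives the lemma in full generality.
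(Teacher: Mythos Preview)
The paper does not give its own proof of this lemma; it is simply quoted with a citation to Blichfeldt, so there is no in-paper argument to compare against.

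Your triangulation argument is the standard one and is correct in the main case where $K\cap\mathbb{Z}^n$ contains $n+1$ affinely independent points. Starting from one full-dimensional lattice simplex and inserting the remaining $N-n-1$ lattice points one at a time does yield at least $N-n$ top-dimensional simplices, each of volume at least $1/n!$, giving $\vol(K)\ge\vol(P)\ge (N-n)/n!$ as you claim. The boundary-insertion subtlety you flag is real but routinely handled by placing/pulling triangulations, so that part is fine.

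The genuine gap is your proposed treatment of the degenerate case. Your plan there---descend to the affine span, apply the statement in lower rank, and then ``verify that the additive term $+n$ provides enough slack''---cannot succeed, because the inequality \emph{as literally stated} is false in that regime. Take $n=2$ and $K=[-M,M]\times[-\epsilon,\epsilon]$ with $M\in\mathbb{Z}_{\ge 1}$ and $0<\epsilon<1$. This $K$ is convex, $0$-symmetric and full-dimensional, with $|K\cap\mathbb{Z}^2|=2M+1$ and $2!\,\vol(K)+2=8M\epsilon+2$; choosing $\epsilon<(2M-1)/(8M)$ makes the right-hand side strictly smaller than the left. No induction on dimension can manufacture the missing $n$-dimensional volume. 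What your main argument actually proves is the bound under the extra hypothesis that $K\cap\mathbb{Z}^n$ affinely spans $\mathbb{R}^n$ (equivalently, for full-dimensional lattice polytopes), which is how this inequality is normally stated. Unless the paper's phrase ``of rank $n$'' is meant to encode precisely that spanning hypothesis, the lemma as written is overstated; in any event, for the paper's application either the relevant lattice points span (and your argument applies) or there are at most $r+s-1$ of them and the trivial bound already suffices.
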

\section{Determining an Upper Bound on the Number of Facets of $\mathcal{F}_K$}
\begin{proof}[Proof of Theorem \ref{maintheorem}]
    By Lemma \ref{pisotred}, any totally positive element $a=(a_1,\dots,a_{r+s}) \in \mathcal{F}_K$ has
    \begin{align*}
\trace(axx^*) \geq \trace(a),
    \end{align*}
for all $x \in K_{\mathbb{R}}, \trace(xx^*) \geq t_K^{2}$, where $t_K=\min_{u \in \mathcal{P}}t_K(u)$ and $\mathcal{P}$ is the set of all Pisot units of $K$. Therefore, if $u \in \mathcal{O}_{K_{\mathbb{R}}}^*$ determines a facet of the reduction domain it must satisfy
\begin{align*}
\trace(uu^*) \leq t_K^2 \iff \log(\trace(uu^*)) \leq 2\log(t_K).
\end{align*}
Let $\epsilon(u)=\max_i\{|\sigma_i(u)|,1/|\sigma_i(u)|: 1 \leq i \leq r+s\}$. Then exactly half of the (nontrivial) unit group satisfy $\epsilon(u)=\max_i |\sigma_i(u)|$ and exactly half of them satisfy $\epsilon(u)=\max_i |\sigma_i(u)|^{-1}$, since if $\epsilon(u)=\max_i |\sigma_i(u)|$ then $\epsilon(1/u)=\max_i|\sigma_i(u)|^{-1}$. We consider only units that satisfy $\epsilon(u)=|\sigma_i(u)|$, so
\begin{align*}
2\log(t_K) \geq \log(\trace(uu^*)) \geq 2\|\Log(u)\|_{\infty},
\end{align*}
so the only possible units satisfying $\epsilon(u)=\max_i|\sigma_i(u)|$ that can possibly constitute facets of the reduction domain must satisfy $\|\Log(u)\|_{\infty} \leq \log(t_K)$. Hence, we want to determine the number of lattice points of $\Lambda_K$ are contained within the convex body
\begin{align}
C=\left\{x \in \mathbb{R}^{r+s}: \|x\|_{\infty} \leq \log(t_K)\right\}. \label{cube}
\end{align}
Now, since $\Lambda_K$ is of full-rank in the half plane $V$ as described in \ref{halfplane}, we need to consider the $0$-symmetric convex body $C \cap V$. By Lemma \ref{cubevolume}, this shape has volume equal to
\begin{align*}
\vol(C \cap V) = \frac{\log(t_K)^{r+s-1}\sqrt{r+s}}{(r+s-1)!}\sum_{k=0}^{\lfloor \frac{r+s}{2} \rfloor}(-1)^k {r+s \choose k}\left(\frac{r+s}{2}-k\right)^{r+s-1}.
\end{align*}
We apply the transform that takes $\Lambda_K$ to the set $\mathbb{Z}^{r+s-1}$, rotated in $\mathbb{R}^{r+s}$ so that it sits within the half-plane $V$. Then applying a similar transform to the convex body $C \cap V$ gives us the new convex body $K$ which has volume
\begin{align*}
\vol(K)=\frac{\log(t_K)^{r+s-1}(r+s)}{R_K(r+s-1)!}\sum_{k=0}^{\lfloor \frac{r+s}{2} \rfloor}(-1)^k {r+s \choose k}\left(\frac{r+s}{2}-k\right)^{r+s-1},
\end{align*}
where $R_K$ is the regulator of the field $K$, using the fact that $\vol(\Lambda_K)=R_K/\sqrt{r+s}$ (see \cite{neukirch}). Then by Lemma \ref{blichfeldt}, the number of integer lattice points inside $K$ is upper bounded by
\begin{align*}
r+s-1+\frac{\log(t_K)^{r+s-1}(r+s)}{R_K}\sum_{k=0}^{\lfloor \frac{r+s}{2} \rfloor}(-1)^k {r+s \choose k}\left(\frac{r+s}{2}-k\right)^{r+s-1}.
\end{align*}
It remains to prove a bound on $\log(t_K)$. Clearly, since $\max_{j \neq 1}|\sigma_j(u)|^2<1$ for any Pisot unit $u$, 
\begin{align*}
1<\frac{1}{1-\max_{j \neq 1}|\sigma_j(u)|^2},
\end{align*}
so
\begin{align*}
t_K=\min_{u \in \mathcal{P}}\sqrt{1+\frac{|u|^2-1}{1-\max_{j \neq i}|\sigma_j(u)|^2}}<\min_{u \in \mathcal{P}}\sqrt{\frac{|u|^2}{1-\max_{j\neq 1}|\sigma_j(u)|^2}},
\end{align*}
where $\mathcal{P}$ denotes the set of Pisot units of $K$. Hence by Lemma \ref{pisotbound}, we must have
\begin{align*}
\log(t_K) < (r+s)\rho^{\infty}(\Lambda)+(r+s-1)\epsilon-\log(1-e^{-2\epsilon}),
\end{align*}
for any $\epsilon>0$. The right-hand side of the above inequality attains its minimum at $\epsilon=\frac{1}{2}(\log(r+s+1)-\log(r+s-1))$, for which we get
\begin{align*}
\log(t_K)<(r+s)\rho^{\infty}(\Lambda)+\frac{r+s-1}{2}\log\left(\frac{r+s+1}{r+s-1}\right)+\log\left(\frac{r+s-1}{2}\right).
\end{align*}
Finally, by Lemma \ref{coveringradiusbound} and the fact that the log-unit lattice is well-rounded with respect to the infinity norm (since $K$ is Galois, if say $\Log(u)$ is the minimum vector, then $\Log(\sigma_i(u))$ has identical length with respect to the infinity norm, and there are $r+s-1$ linearly independent vectors of this form), 
\begin{align*}
\rho^{\infty}(\Lambda_K) \leq \begin{cases} \frac{\sqrt{r+s-1}}{2\sqrt{r+s}^{\frac{1}{r+s-1}}}R_K^{\frac{1}{r+s-1}}, \hspace{2mm}  &1 \leq r+s-1 \leq 10,
\\ \frac{(r+s-1)}{2\sqrt{r+s}^\frac{1}{r+s-1}} R_K^{\frac{1}{r+s-1}}, \hspace{2mm} &10<r+s-1. 
\end{cases}
\end{align*}
Then since we have counted exactly half of the required integer lattice points, we get
\begin{align*}
N_K < &2(r+s-1)\\&+2\left(\frac{1}{2}(r+s-1)^\delta(r+s)^{1-\frac{1}{2(r+s-1)}}+\frac{r+s-1}{2}\frac{\log\left(\frac{r+s+1}{r+s-1}\right)}{R_K^{\frac{1}{r+s-1}}}+\frac{\log\left(\frac{r+s-1}{2}\right)}{R_K^{\frac{1}{r+s-1}}}\right)^{r+s-1}\\& \cdot (r+s)\sum_{k=0}^{\lfloor \frac{r+s}{2} \rfloor}(-1)^k {r+s \choose k}\left(\frac{r+s}{2}-k\right)^{r+s-1},
\end{align*}
where
\begin{align*}
\delta= \begin{cases} 1/2, \hspace{2mm}  &1 \leq r+s-1 \leq 10,
\\ 1, \hspace{2mm} &10<r+s-1.
\end{cases}
\end{align*}
\end{proof}
\subsection{The Special Case of $[K:\mathbb{Q}]=3$}
The case where $[K:\mathbb{Q}]=3$ can be treated separately, as in this case, every element of the unit group is either $\pm 1$, a Pisot unit, the inverse of a Pisot unit, the conjugate of a Pisot unit or the inverse conjugate of a Pisot unit. We begin by proving the following useful lemma.
\begin{lemma}\label{n=3lemma}
Suppose that $\mathbf{b}_1=(\alpha,\beta,-\alpha-\beta)$ and $\mathbf{b}_2=(\gamma,\delta,-\gamma-\delta)$ for some $\alpha,\beta,\gamma,\delta \in \mathbb{R}$ satisfying $|\alpha|,|\beta|,|\gamma|,|\delta|, |\alpha+\beta|, |\gamma+\delta| >0$, and assume that $\mathbf{b}_1,\mathbf{b}_2$ are linearly independent over $\mathbb{R}$. Suppose that $\mathbf{b}_1,\mathbf{b}_2$ satisfy
\begin{align}
\|\mathbf{b}_1\|_\infty \leq \|\mathbf{b}_2\|_\infty \leq \|\mathbf{b}_1 \pm \mathbf{b}_2\|_\infty. \label{ineq1}
\end{align}
Let $\mathbf{v}=x\mathbf{b}_1+y\mathbf{b}_2$, for $x,y \in \mathbb{Z}$. Then unless $(|x|,|y|)$ are in the following set:
\begin{align}
S=\{(0,0),(1,0),(0,1),(1,1),(2,1),(1,2)\}, \label{set1}
\end{align}
$\|\mathbf{v}\|_{\infty} \geq 2\lambda^{\infty}$.
\end{lemma}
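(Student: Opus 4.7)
The plan is to prove the contrapositive. Normalize $\lambda^\infty=\|\mathbf{b}_1\|_\infty=1$ by homogeneity, and set $\lambda_2=\|\mathbf{b}_2\|_\infty\geq 1$ (which is forced by the reduction condition). By the symmetries $\mathbf{v}\mapsto-\mathbf{v}$ and $\mathbf{b}_2\mapsto-\mathbf{b}_2$ (the latter preserves the reduction condition since it swaps the roles of $\mathbf{b}_1+\mathbf{b}_2$ and $\mathbf{b}_1-\mathbf{b}_2$), I may assume $x,y\geq 0$. Write $\mathbf{b}_1=(a_1,a_2,a_3)$ and $\mathbf{b}_2=(c_1,c_2,c_3)$ with $\sum_i a_i=\sum_i c_i=0$. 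The hypothesis that all six of $|\alpha|,|\beta|,|\alpha+\beta|,|\gamma|,|\delta|,|\gamma+\delta|$ are positive forces the index $i^*$ with $|a_{i^*}|=1$ to be unique (else two coordinates of $\mathbf{b}_1$ would be $\pm 1$ and the third would be $0$); after further flipping the sign of $\mathbf{b}_1$ and $x$, I may assume $a_{i^*}=1$, so that $a_{i_1},a_{i_2}\in(-1,0)$ with $a_{i_1}+a_{i_2}=-1$. The axis cases $x=0$ or $y=0$ follow immediately from $|y|\lambda_2\geq |y|$ or $|x|\cdot 1$, so suppose $x,y\geq 1$ and, for contradiction, $\|\mathbf{v}\|_\infty<2$.

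From $|xa_i+yc_i|<2$ one derives the elementary bounds
\[
|c_i|\;<\;\tfrac{x|a_i|+2}{y}\;\leq\;\tfrac{x+2}{y},\qquad |a_i+c_i|\;<\;\tfrac{|x-y|\,|a_i|+2}{y}\;\leq\;\tfrac{|x-y|+2}{y}.
\]
Two \emph{wide} regimes yield contradictions at once: if $x\leq y-2$, the first bound gives $\lambda_2<1$, contradicting $\lambda_2\geq 1$; and if $2\leq x\leq 2y-2$, the second bound gives $\|\mathbf{b}_1+\mathbf{b}_2\|_\infty<1\leq\lambda_2$, contradicting the reduction condition. Together these dispose of every $(x,y)\notin S$ except the \emph{narrow} cases with $x>2y-2$ and $x\geq 3$, namely $(3,1),(k,1)$ for $k\geq 4$, $(3,2),(4,2),(5,2),(5,3),\dots$.

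For the narrow cases I argue directly on the coordinates. At $i^*$, the constraint $|x+yc_{i^*}|<2$ combined with $|c_{i^*}|\leq\lambda_2$ gives $c_{i^*}\in\bigl(-\lambda_2,(2-x)/y\bigr)$, which is empty as soon as $\lambda_2\leq(x-2)/y$; this disposes of $(k,1)$ for $k\geq 4$ in one stroke. Otherwise a short computation shows $|1+c_{i^*}|<\lambda_2$. For an index $i\neq i^*$ (where $a_i<0$), the inequality $a_i+c_i\geq\lambda_2$ forces $c_i\geq\lambda_2+|a_i|>\lambda_2$, impossible; the inequality $a_i+c_i\leq-\lambda_2$ forces $c_i\leq-\lambda_2+|a_i|$, and then the sum relation $c_{i'}=-c_{i^*}-c_i$ at the remaining index $i'$ produces $c_{i'}>1+\lambda_2-|a_i|>\lambda_2$ (using $-c_{i^*}>1$ and $|a_i|<1$), again contradicting $|c_{i'}|\leq\lambda_2$. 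Hence $|a_i+c_i|<\lambda_2$ for every $i$, so $\|\mathbf{b}_1+\mathbf{b}_2\|_\infty<\lambda_2$, contradicting the reduction condition.

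The main obstacle is precisely this narrow-regime analysis, in particular the case $(3,1)$: no norm-level triangle inequality is tight enough to yield the bound, and the contradiction only emerges by examining all three coordinates of $\mathbf{b}_1+\mathbf{b}_2$ simultaneously and combining the sum relation $\sum_i c_i=0$ with the componentwise bound $|c_i|\leq\lambda_2$ in tandem.
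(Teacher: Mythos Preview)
Your decomposition into ``wide'' and ``narrow'' regimes is genuinely different from the paper's proof, which fixes $\alpha=\|\mathbf{b}_1\|_\infty$, invokes an external reference to assume $\lambda_2<2$, and then splits into cases according to which coordinate of $\mathbf{b}_2$ attains $\|\mathbf{b}_2\|_\infty$. Your approach is more systematic and avoids the external citation, but as written it contains two errors.

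First, the claim that emptiness of $(-\lambda_2,(2-x)/y)$ ``disposes of $(k,1)$ for $k\geq 4$ in one stroke'' is unjustified: emptiness would require $\lambda_2\leq k-2$, and you have established no upper bound on $\lambda_2$ at that point. This is harmless, since your ``Otherwise'' coordinate argument in fact covers $(k,1)$ for all $k\geq 3$ (there $(2-x)/y=2-k\leq -1$, so $-c_{i^*}>1$ does hold). Second, and this is a genuine gap, the assertion $-c_{i^*}>1$ fails for the narrow case $(x,y)=(3,2)$: there $(2-x)/y=-1/2$, so you only get $-c_{i^*}>1/2$, and your chain $c_{i'}>1+\lambda_2-|a_i|>\lambda_2$ breaks down. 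The case can be rescued by observing that for $(3,2)$ the subcase $a_i+c_i\leq -\lambda_2$ at $i\neq i^*$ is vacuous: combining $c_i\leq -\lambda_2+|a_i|$ with the constraint $c_i>(3|a_i|-2)/2$ coming from $|3a_i+2c_i|<2$ forces $|a_i|<2-2\lambda_2\leq 0$, which is impossible. But this calculation is absent from your write-up, so $(3,2)$ is not actually handled by the argument as it stands.
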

\begin{proof}
See appendix.
\end{proof}
Now, let $K$ be a number field of degree $3$ over $\mathbb{Q}$. Suppose that the vectors $\mathbf{b}_1,\mathbf{b}_2$ generate $\Log(\mathcal{O}_K^*)$, and suppose that $\mathbf{b}_1,\mathbf{b}_2$ satisfy \ref{ineq1} without loss of generality. Let $u$ denote the unit that corresponds to the shortest non-zero element of $\Log(\mathcal{O}_K^*)$, under the logarithmic embedding. We may assume that $u$ is a Pisot unit, as otherwise $u$ is either the inverse or a conjugate (or both) of a Pisot unit, which does not affect the length of the element under the logarithmic embedding. Note then that
\begin{align*}
t_K(u)=\sqrt{1+\frac{|u|^2-1}{1-\max_{j \neq 1}|\sigma_j(u)|^2}} <\sqrt{1+\frac{|u|^2-1}{\prod_{j \neq 1}(1-|\sigma_j(u)|^2)}}.
\end{align*}
Since $K$ is Galois, $K$ must be totally real. Clearly $1-u^2$ is an algebraic integer, so $|\norm_{K/\mathbb{Q}}(1-u^2)| \geq 1$, which gives
\begin{align*}
|\norm_{K/\mathbb{Q}}(1-u^2)|=(u^2-1)\prod_{j \neq 1}(1-\sigma_j(u)^2) \geq 1 \iff \prod_{j \neq 1}(1-\sigma_j(u)^2) \geq (u^2-1)^{-1},
\end{align*}
and so
\begin{align*}
t_K(u) < \sqrt{1+(u^2-1)^2}.
\end{align*}
Assume without loss of generality that
\begin{align*}
\trace(au^2) \geq \trace(a)
\end{align*}
(this may be done without loss of generality, as otherwise we may find an equivalent totally positive element $a^\prime$ such that this holds). Given that $\|\Log(u)\|_{\infty}=\lambda_1^{\infty}$ by construction, by a similar argument as the one posed in the previous section that lead to us constructing the convex body in \ref{cube}, we are looking for integer solutions to the inequality
\begin{align*}
\|x\mathbf{b}_1+y\mathbf{b}_2\|_{\infty} \leq \frac{1}{2}\log(1+(\sigma_i(u)^2-1)^2)=\frac{1}{2}\log(1+(\exp(2\lambda^{\infty})-1)^2),
\end{align*}
which gives
\begin{align*}
\frac{\|x\mathbf{b}_1+y\mathbf{b}_2\|_{\infty}}{\lambda^{\infty}} \leq \frac{1}{2}\log((1+(\exp(2\lambda^{\infty})-1)^2)^{1/\lambda^{\infty}}).
\end{align*}
The right hand side of the inequality tends to $2$ as $\lambda^{\infty} \to \infty$, and so the solutions $(x,y)$ to the above equation have absolute values that are limited to those in the set $S$ in \ref{set1}.
\section{A Linear Complexity Reduction Algorithm}
Whilst usually we say that a form $axx^*$ is reduced if $a \in \mathcal{F}_{K_{\mathbb{R}}}$, the notion of reduction can also be defined more broadly. For example, types of reduction of (rational) quadratic forms include Minkowski \cite{geometriederzahlen}, Korkin-Zolotarev \cite{HKZ} and Lenstra-Lovasz-Lovasz (LLL) \cite{LLL}. We present a very simple algorithm that, given some totally positive element $a \in K_{\mathbb{R}}$, finds an equivalent totally positive element $a^\prime$ with ``desirable'' properties.
\begin{algorithm}\label{reductionalgorithm}
\SetKwInOut{Input}{input}\SetKwInOut{Output}{output} \Input{A totally positive element $a \in K_{\mathbb{R}}$, a unit $u \in \mathcal{O}_K^*$ and its associated elements $v_i \in \mathcal{O}_{K_\mathbb{R}}^*$, where $v_i$ is the result of embedding $\sigma_i(u)$ into $K_{\mathbb{R}}$, and some real constant $0<\delta \leq 1$.} \Output{A totally positive element $a^\prime$ that is equivalent to $a$.} \BlankLine 
\nl Set $j=1$.\\
\nl If $j=r+s$, return $a$ and end the algorithm. \\
\nl If $\trace(av_jv_j*)<\delta\trace_{K/\mathbb{Q}}(a)$, set $a \to av_jv_j^*$ and return to step 1. Otherwise, set $j \to j+1$ and return to step 2.
\caption{A simple reduction algorithm.}
\end{algorithm}
\begin{proposition}
With inputs $\delta, b$ and $u$, assuming that $\delta$ is strictly less than 1, algorithm \ref{reductionalgorithm} performs at most $\mathcal{O}(\log(X)((r+s+1)\log(X)+(r+s)\log(\max_i |\sigma_i(u)|)))$ bit operations, where $X=\max_i a_i$, where $a=(a_1,\dots,a_{r+s})$.
\end{proposition}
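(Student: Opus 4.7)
The plan is to separately bound (a) the number of replacement steps $a \to av_jv_j^*$ the algorithm performs, and (b) the bit cost of one full pass over $j=1,\ldots,r+s-1$ between two replacements; multiplying the two bounds yields the overall complexity.

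For (a), every replacement multiplies $\trace(a)$ by a factor of at most $\delta<1$, so after $N$ replacements, $\trace(a^{(N)}) \leq \delta^N \trace(a^{(0)}) \leq \delta^N(r+2s)X$. For a matching lower bound, I would exploit that the weighted product
\[
P(a) = \prod_{i=1}^r a_i \cdot \prod_{i=r+1}^{r+s} a_i^2
\]
is invariant under the replacement, because $P(v_jv_j^*)=|\norm_{K/\mathbb{Q}}(u)|^2=1$ for the unit $u$. The weighted AM--GM inequality then yields $\trace(a^{(N)}) \geq (r+2s)\, P(a^{(0)})^{1/(r+2s)}$, and taking logarithms gives $N = O\bigl(\log(X / P(a^{(0)})^{1/(r+2s)})\bigr)$. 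When $a$ arises as $xx^*$ for some nonzero $x \in \mathcal{O}_{K_\mathbb{R}}$ (corresponding to $x' \in \mathcal{O}_K$), one has $P(a) = |\norm_{K/\mathbb{Q}}(x')|^2 \geq 1$, so $N = O(\log X)$ with a constant depending on $\delta$.

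For (b), the key invariant is that every coordinate satisfies $a_i \leq \trace(a) \leq \trace(a^{(0)})$ throughout the execution, because the trace is non-increasing. Hence each $a_i$ has bit size $O(\log X)$ at all times. The multipliers $|\sigma_i(\sigma_j(u))|^2$ are, by the Galois hypothesis on $K$, a permutation of the squared conjugate moduli of $u$ and each has bit size $O(\log \max_i |\sigma_i(u)|)$; they can be precomputed once at the start. A single outer pass performs at most $r+s-1$ checks, each computing $\trace(av_jv_j^*) = \sum_i w_i a_i |\sigma_i(\sigma_j(u))|^2$ with $w_i\in\{1,2\}$ and comparing with $\delta\trace(a)$. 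Accounting for the multiplications, the additions needed to assemble the sum, and the final comparison against the precomputed $\delta\trace(a)$, the per-pass total works out to $O((r+s+1)\log X + (r+s)\log\max_i |\sigma_i(u)|)$ bit operations.

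Multiplying the bounds from (a) and (b) yields the claimed complexity. The principal obstacle is the lower bound on $\trace(a^{(N)})$ used in (a): the weighted AM--GM step controls $N$ only when $P(a^{(0)})$ is not exponentially small in $X$, so in full generality one must either impose a normalization on $a$ or absorb a $\log(1/P(a^{(0)})^{1/(r+2s)})$ term into the iteration count. A secondary subtlety is verifying that the bit sizes of the $a_i$ do not grow uncontrollably through the sequence of multiplications by the $v_jv_j^*$; this is handled cleanly by the trace-monotonicity observation above, which forces $a_i \leq \trace(a^{(0)})$ at every step.
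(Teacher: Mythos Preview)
Your proposal follows the same two-part decomposition as the paper's proof---bound the number of replacement rounds, then bound the bit cost of a single pass, and multiply---so the overall architecture is identical. Where you differ is in rigor, and in both places your version is the more careful one.

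First, for the round count the paper simply asserts that because $\delta<1$ the algorithm performs $\mathcal{O}(\log X)$ rounds, without supplying any lower bound on $\trace(a)$ to stop the geometric decrease. Your use of the invariance of the weighted product $P(a)=\prod_i a_i^{w_i}$ under $a\mapsto av_jv_j^*$ (which follows from $|\norm_{K/\mathbb{Q}}(u)|=1$) together with weighted AM--GM is a genuine addition. The ``principal obstacle'' you flag is therefore not a defect of your argument relative to the paper's: it is an unacknowledged gap in the paper's own proof, since for an arbitrary totally positive $a\in K_{\mathbb{R}}$ the quantity $P(a)$ can be exponentially small in $X$ and the $\mathcal{O}(\log X)$ bound then fails without some normalization.

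Second, the paper asserts that the coordinates $a_i$ remain bounded by $X$ throughout the run. This is not literally true after a replacement, since multiplying by $v_jv_j^*$ can enlarge individual coordinates even while shrinking their sum. Your trace-monotonicity observation, giving $a_i \le \trace(a) \le \trace(a^{(0)}) \le (r+2s)X$ at every step, is the correct way to control the running bit sizes and is absent from the paper.
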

\begin{proof}
Clearly each full round of the algorithm either results in the termination of the algorithm, or we find some equivalent $a^\prime$ such that $\trace(a^\prime)<\delta \trace(a)$, and so since $\delta<1$ the algorithm can only perform $\mathcal{O}(\log(X))$ rounds. In the worst case, a full round would require us to compute the value of $\trace(av_jv_j^*)$ $r+s$ times. The values of $a_i$ are bounded above by $X$ and the values of $|\sigma_j(u)|$ are bounded above by $\max_j |\sigma_j(u)|$, so a round can have a maximum of $\mathcal{O}((r+s)\log(\max_i |\sigma_i(u)|)+(r+s+1)\log(X))$ bit computations.
\end{proof}
\begin{theorem}
Suppose that algorithm \ref{reductionalgorithm} takes as input a totally positive element $a=(a_1,a_2,\dots,a_{r+s}) \in K_{\mathbb{R}}$, a Pisot unit $u$, and some parameter $0<\delta \leq 1$, and outputs some $a^\prime=(a_1^\prime,a_2^\prime,\dots,a_{r+s}^\prime)$ equivalent to $a$. Denote by
\begin{align*}
\mu(a) \triangleq \min_{x \in \mathcal{O}_{K_{\mathbb{R}}}\setminus \{0\}} \trace(axx^*).
\end{align*}
Then
\begin{align}
\trace(a^\prime) \leq \max\left\{\frac{t_K(u,\delta)^2}{\min_{x \in \mathcal{S}} \trace(xx^*)},1\right\}\mu(a), \label{ineq13}
\end{align}
where 
\begin{align*}
t_K(u,\delta)=\sqrt{1+\frac{|u|^2-\delta}{\delta-\max_{j \neq 1}|\sigma_j(u)|^2}},
\end{align*}
and $\mathcal{S}$ denotes the elements of $\mathcal{O}_{K_\mathbb{R}}$ that do not correspond to roots of unity or zero in $\mathcal{O}_K$. Moreover, if
\begin{align*}
\mu(a)=\trace(axx^*)
\end{align*}
for some $x \in \mathcal{O}_{K_\mathbb{R}}$, then
\begin{align}
\trace(xx^*) \leq t_K(u,\delta)^2. \label{ineq14}
\end{align}
\end{theorem}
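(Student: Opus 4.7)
The plan is to mirror the algebraic argument of Lemma \ref{pisotred}, but with the weaker termination condition of Algorithm \ref{reductionalgorithm} (the factor $\delta$ in place of $1$). Each iteration of the algorithm replaces $a$ by the equivalent element $av_jv_j^*$, so the output $a'$ is equivalent to $a$; and because left multiplication by a unit $v\in\mathcal{O}_{K_\mathbb{R}}^*$ permutes $\mathcal{O}_{K_\mathbb{R}}\setminus\{0\}$, this also gives $\mu(a)=\mu(a')$. The algorithm returns only after Step~3 has failed for every $j$, so the output satisfies
\begin{align*}
\trace(a'v_jv_j^*)\geq \delta\trace(a'),\qquad j\in\{1,\dots,r+s\}.
\end{align*}

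Expanding this inequality and using that $u$ is Pisot, for each $j$ there is some coordinate $k_j$ with $|\sigma_{k_j}(\sigma_j(u))|=|u|$; isolating that coordinate in $\sum_k a'_k(|\sigma_k(\sigma_j(u))|^2-\delta)\geq 0$ yields
\begin{align*}
(|u|^2-\delta)\,a'_{k_j}\geq \bigl(\delta-\max_{i\neq 1}|\sigma_i(u)|^2\bigr)\sum_{k\neq k_j}a'_k,
\end{align*}
which rearranges to $\trace(a')\leq t_K(u,\delta)^2\,a'_{k_j}$. Because $K/\mathbb{Q}$ is Galois, the compositions $\sigma_k\circ\sigma_j$ permute the embeddings, and as $j$ varies over $\{1,\dots,r+s\}$ the indices $k_j$ exhaust every coordinate, so $\trace(a')\leq t_K(u,\delta)^2\,a'_k$ for every $k$. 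Consequently,
\begin{align*}
\trace(a'xx^*)=\sum_k a'_k|x_k|^2 \geq t_K(u,\delta)^{-2}\,\trace(a')\,\trace(xx^*),\qquad x\in K_\mathbb{R}.
\end{align*}

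To conclude (\ref{ineq13}) I would pick a minimizer $x_0\in\mathcal{O}_{K_\mathbb{R}}\setminus\{0\}$ of $\mu(a)=\mu(a')$ and split cases: if $x_0$ corresponds to a root of unity in $\mathcal{O}_K$ then $x_0x_0^*=(1,\dots,1)$, whence $\mu(a')=\trace(a')$ and the ``$\max$'' factor equals $1$; otherwise $x_0\in\mathcal{S}$, and combining the preceding display with $\trace(x_0x_0^*)\geq\min_{x\in\mathcal{S}}\trace(xx^*)$ rearranges to (\ref{ineq13}). The bound (\ref{ineq14}) then drops out by noting $\mu(a')\leq\trace(a')$ (take $x=1$) and substituting into $\mu(a')\geq t_K(u,\delta)^{-2}\trace(a')\,\trace(x_0x_0^*)$. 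I expect the main obstacle to be the cycling argument in the second step: rigorously verifying, from the Galois hypothesis, that the positions $k_j$ where each $v_j$ attains its maximum absolute value exhaust all of $\{1,\dots,r+s\}$; once that is in hand, the remainder is essentially the calculation of Lemma \ref{pisotred} with $\delta$ in place of $1$.
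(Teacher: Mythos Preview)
Your proposal is correct and follows essentially the same approach as the paper's proof: both establish the coordinate-wise lower bound $a'_i \geq t_K(u,\delta)^{-2}\,\trace(a')$ by adapting the argument of Lemma~\ref{pisotred} with $\delta$ in place of $1$, then split on whether the minimizer of $\mu(a')=\mu(a)$ is a root of unity to deduce (\ref{ineq13}) and (\ref{ineq14}). Your write-up is in fact more explicit than the paper's, which simply invokes ``an argument similar to that in Lemma~\ref{pisotred}'' for the key coordinate bound without spelling out the Galois cycling step you correctly flag as the main point to check.
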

\begin{proof}
First, by an argument similar to that in Lemma \ref{pisotred}, each $a_i^\prime$ for $1 \leq i \leq r+s$ must satisfy
\begin{align*}
a_i^\prime \geq \trace(a^\prime)t_K(u,\delta)^{-2}.
\end{align*}
Suppose that $x=(x_1,\dots,x_{r+s}) \in \mathcal{O}_{K_\mathbb{R}}$ is the element that satisfies $\trace(a^\prime xx^*)=\mu(a)$. Clearly, inequality \ref{ineq13} holds if $x$ corresponds to a root of unity in $\mathcal{O}_K$, so we assume that $x \in \mathcal{S}$. Then
\begin{align*}
\mu(a)=\trace(a^\prime xx^*) = \left(\sum_{j=1}^{r}+ 2\sum_{j=r+1}^{r+s}\right) a_i^\prime x_ix_i^* \geq t_K(u,\delta)^{-2} \trace(a^\prime)\left(\sum_{j=1}^{r}+ 2\sum_{j=r+1}^{r+s}\right) x_ix_i^*.
\end{align*}
Inequalities \ref{ineq13} and \ref{ineq14} follow.
\end{proof}

\section*{Appendix}
\begin{proof}[Proof of Lemma \ref{n=3lemma}]
Throughout this proof, we will let $\mathbf{v}=x\mathbf{b}_1+y\mathbf{b}_2$ for some integer pair $(x,y) \in \mathbb{Z}^2$. Our method of proof is to assume that we have a pair $(x,y)$ that is not contained in $S$ such that $\|\mathbf{v}\|_{\infty} <2\lambda^{\infty}$, such that $\mathbf{b}_1,\mathbf{b}_2$ obey the inequalities \ref{ineq1}, and to show that we come by a contradiction. We omit the case where either $x$ or $y$ is zero, since obviously then we will come by a contradiction. We may freely assume throughout that $\|\mathbf{b}_1\|_{\infty}=\alpha>0$. We will assume throughout that $\|\mathbf{b}_2\|_{\infty}<2\|\mathbf{b}_1\|_{\infty}$, as otherwise, since it was shown in \cite{3} under the assumption that inequalities \ref{ineq1} holds we have $\|x\mathbf{b}_1+y\mathbf{b}_2\|_{\infty} \geq \|\mathbf{b}_2\|$ for all nonzero $x,y$, we would come to a contradiction.

Throughout the next part of the proof, we will assume that $\|\mathbf{b}_2\|_{\infty}=\gamma>0$, as we can force $\gamma$ to be positive by a switch of signs. By \ref{ineq1}, at least one of the following inequalities holds:
\begin{align}
&|\alpha-\gamma| \geq \|\mathbf{b}_2\|_{\infty}, \label{ineq2},
\\& |\beta-\delta| \geq \|\mathbf{b}_2\|_{\infty}, \label{ineq3}
\\& |\alpha+\beta-\gamma-\delta| \geq \|\mathbf{b}_2\|_{\infty}. \label{ineq4}
\end{align}
Clearly, since $\gamma>0$ and $\gamma=\|\mathbf{b}_2\|<2\alpha$, clearly \ref{ineq2} cannot hold. So, the inequalities we have to possibly consider are
\begin{align}
&|\beta-\delta| \geq \gamma, \label{ineq5},
\\& |\alpha+\beta-\gamma-\delta| \geq \gamma. \label{ineq6}
\end{align}
Assume that $\label{ineq5}$ holds. Since $|\alpha+\beta| \leq \alpha$, we must have $\beta<0$, so necessarily we must also have that $\delta>0$. But $|\gamma+\delta| \leq \gamma$ also implies that $\delta<0$, which is a contradiction. So instead assume that \label{ineq6} holds. Then
\begin{align*}
\max\{\alpha+\beta-\gamma-\delta,\gamma+\delta-\alpha-\beta\} \geq \gamma.
\end{align*}
But this cannot hold:
\begin{align*}
\alpha+\beta-\gamma-\delta \geq \gamma \iff \alpha>\alpha+\beta \geq 2\gamma+\delta,
\end{align*}
but $|\gamma+\delta| \leq \gamma$ implies that $\delta<0$, so this would imply that $\alpha>\gamma$ which is a contradiction, and
\begin{align*}
\gamma+\delta-\alpha-\beta \geq \gamma \iff \delta \geq \alpha+\beta>0,
\end{align*}
but again we need $\delta<0$, which is a contradiction. Hence, it cannot hold that $\gamma=\|\mathbf{b}_2\|_{\infty}$.

Throughout the next part of the proof, we will assume that $\|\mathbf{b}_2\|_{\infty} =\delta >0$, as we can force $\delta$ to be positive by a switch of signs, and we stop assuming any sign on $\gamma$. In fact, $|\gamma+\delta| \leq \delta$ necessarily implies that $\gamma<0$ now. By \ref{ineq1}, at least one of the following inequalities hold:
\begin{align}
&|\alpha+\gamma| \geq \|\mathbf{b}_2\|_{\infty}, \label{ineq7}
\\& |\beta+\delta| \geq \|\mathbf{b}_2\|_{\infty}, \label{ineq8}
\\& |\alpha+\beta+\gamma+\delta| \geq \|\mathbf{b}_2\|_{\infty}. \label{ineq9}
\end{align}
Since $\gamma<0$ and $\alpha \leq \delta$, clearly \ref{ineq7} cannot hold. Moreover, clearly \ref{ineq8} cannot hold since necessarily $\beta<0$. So the only case we need to consider here is
\begin{align*}
|\alpha+\beta+\gamma+\delta| \geq \delta.
\end{align*}
Since $\alpha \geq -\beta$ and $\delta \geq -\gamma$,
\begin{align*}
\alpha+\beta+\gamma+\delta \geq \delta \iff  \alpha \geq -\beta-\gamma
\end{align*}
Hence, if $x,y$ are of opposite signs,
\begin{align*}
\|\mathbf{v}\|_{\infty} \geq \max\{|x|\alpha+|y||\gamma|, |x||\beta|+|y|\delta\},
\end{align*}
which is greater than or equal to $2\lambda^{\infty}$ if $\min\{|x|,|y|\} \geq 2$, so we assume that $x,y$ are of the same sign. Then
\begin{align*}
\|\mathbf{v}\|_{\infty} \geq ||x|(\alpha+\beta)+|y|(\gamma+\delta)| \geq |x|\alpha+|y|\delta-\max\{|x|,|y|\}\alpha \geq \min\{|x|,|y|\}\alpha,
\end{align*}
which is greater than or equal to $2\lambda^{\infty}$ if $\min\{|x|,|y|\} \geq 2$, so this would give us another contradiction when we assume that $\|\mathbf{b}_2\|_{\infty}=\delta$.

For the remainder of the proof, we will assume that $\|\mathbf{b}_2\|_{\infty}=\gamma+\delta>0$, as we can force the sign of $\gamma+\delta$ to be positive by a switch of signs, and we stop assuming a sign on $\delta$. Since $\alpha+\beta>0$, clearly now \ref{ineq4} cannot hold, so we need to consider the possible inequalities
\begin{align}
&|\alpha-\gamma| \geq \gamma+\delta, \label{ineq10}
\\&|\beta-\delta| \geq \gamma+\delta.  \label{ineq11}
\end{align}
In fact, note that $\gamma,\delta$ must be the same sign, as otherwise we would have $|\gamma+\delta|<\min\{|\gamma|,|\delta|\}$. Therefore, $\gamma,\delta>0$ as $\gamma+\delta>0$ by assumption. Therefore, \ref{ineq10} cannot hold as $|\alpha-\gamma|<\alpha$, since $\gamma<\gamma+\delta < 2\alpha$ by assumption. So the only possibility is
\begin{align}
|\beta|+\delta \geq \gamma+ \delta \iff |\beta| \geq \gamma. \label{ineq12}
\end{align}
If $x,y$ are the same sign,
\begin{align*}
\|\mathbf{v}\| \geq \max\{|x|\alpha,|y|(\gamma+\delta)\}
\end{align*}
which is greater than or equal to $2 \lambda^{\infty}$ if $\min\{|x|,|y|\} \geq 2$, so assume that $x,y$ are of opposite signs. Then
\begin{align*}
\|\mathbf{v}\| \geq |x||\beta|+|y|\delta \geq |x|\gamma+|y|\delta,
\end{align*}
which is greater than or equal to $2\lambda^{\infty}$ if $\min\{|x|,|y|\} \geq 1$. Therefore assume that $\min\{|x|,|y|\}=1$. If $\min\{|x|,|y|\}=|x|=1$, then
\begin{align*}
\|\mathbf{v}\| \geq |y|(\gamma+\delta)-(\alpha+\beta) \geq (|y|-1)\alpha,
\end{align*}
which is greater than or equal to $2\lambda^{\infty}$ if $|y| \geq 3$. Finally, assume that $\min\{|x|,|y|\}=|y|=1$. Then by \ref{ineq12},
\begin{align*}
\|\mathbf{v}\| \geq |x|\alpha -\gamma \geq |x|\alpha -|\beta| \geq (|x|-1)\alpha,
\end{align*}
which is greater than or equal to $2\lambda^{\infty}$ if $|x| \geq 2$. Therefore, we have arrived at a contradiction for every case, and so the lemma holds.
\end{proof}
\begin{lemma}\label{sumbound}
$\displaystyle\sum_{k=0}^{\left\lfloor\frac{r+s}2\right\rfloor}(-1)^k\binom{r+s}k\left(\frac{r+s}2-k\right)^{r+s-1}\le\left(\frac{\sqrt e}{2\pi}+o(1)\right)\left(e^{1+\frac1{2e}}\right)^{r+s}(r+s-1)!$ as $r+s\to+\infty$.
\end{lemma}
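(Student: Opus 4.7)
The plan is to use the triangle inequality together with crude estimates and Stirling's formula, observing that the stated inequality is quite loose: a tighter analysis via Lemma \ref{cubevolume} and the local central limit theorem actually gives $S_n\sim\sqrt{6/(\pi n)}(n-1)!$, which is far smaller than the right-hand side. Writing $n = r+s$ for brevity, the quantity to estimate is
\begin{align*}
S_n = \sum_{k=0}^{\lfloor n/2\rfloor}(-1)^k\binom{n}{k}\left(\frac{n}{2}-k\right)^{n-1}.
\end{align*}

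The first step is to apply the triangle inequality to obtain $|S_n|\le\sum_{k=0}^{\lfloor n/2\rfloor}\binom{n}{k}(n/2-k)^{n-1}$. Then use the crude bounds $(n/2-k)^{n-1}\le(n/2)^{n-1}$ for $0\le k\le n/2$ and $\sum_{k=0}^{\lfloor n/2\rfloor}\binom{n}{k}\le 2^n$ to get $|S_n|\le 2^n(n/2)^{n-1}=2n^{n-1}$. Applying Stirling's formula $(n-1)!\ge\sqrt{2\pi(n-1)}((n-1)/e)^{n-1}$ together with the classical inequality $(1+1/(n-1))^{n-1}\le e$, we conclude
\begin{align*}
\frac{n^{n-1}}{(n-1)!}\le\frac{e^n}{\sqrt{2\pi(n-1)}}(1+o(1)),
\end{align*}
so $|S_n|\le\dfrac{2 e^n}{\sqrt{2\pi(n-1)}}(n-1)!\,(1+o(1))$.

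It remains to compare this with the claimed bound. Dividing by $(n-1)!\,(e^{1+1/(2e)})^n$ yields
\begin{align*}
\frac{|S_n|}{(n-1)!\,(e^{1+1/(2e)})^n}\le\frac{2\,e^{-n/(2e)}}{\sqrt{2\pi(n-1)}}(1+o(1))\longrightarrow 0\quad\text{as }n\to\infty,
\end{align*}
since both the exponentially decaying factor $e^{-n/(2e)}$ and the polynomial decay $1/\sqrt{n}$ tend to $0$. In particular, because $\sqrt{e}/(2\pi)>0$ is a strictly positive constant, this ratio is bounded above by $\sqrt{e}/(2\pi)+o(1)$ for all large $n$, which is the desired inequality. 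The proof has no genuine obstacle; the key observation is that the exponential slack $e^{n/(2e)}$ on the right-hand side of the claim absorbs the polynomial correction $1/\sqrt{n}$ coming from Stirling, so a simple term-by-term estimate of $|S_n|$ is already more than sufficient.
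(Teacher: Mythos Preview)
Your proof is correct, and it is considerably simpler than the argument given in the paper. The paper proceeds by first invoking the volume interpretation (Lemma~\ref{cubevolume}) to conclude $S_n\ge 0$, then bounding $S_n$ above by the sum over even $k$ only, expanding $\binom{n}{k}$ via Stirling applied to $(n-k)!$, bounding $n-k\ge n/2$, locating the maximum of the resulting summand $(n/(2e))^k/k!$ near $k\approx n/(2e)$, and finally applying Stirling a second time to obtain the constant $\sqrt{e}/(2\pi)$ exactly. In contrast, you exploit the exponential slack $e^{n/(2e)}$ built into the right-hand side: the crude estimate $|S_n|\le 2^n(n/2)^{n-1}=2n^{n-1}$ together with a single application of Stirling already shows that the ratio $|S_n|/\bigl((n-1)!\,(e^{1+1/(2e)})^n\bigr)$ tends to zero, which is strictly stronger than the stated bound. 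Your approach thus sacrifices the specific constant $\sqrt{e}/(2\pi)$ but gains a much shorter proof and, as you note, makes transparent that the lemma is far from sharp---indeed your aside about the true asymptotic $S_n\sim\sqrt{6/(\pi n)}\,(n-1)!$ (coming from the local central limit theorem applied to the cross-section volume) shows that even the base $e$ on the right-hand side is already wasteful.
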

\begin{proof} As we're treating $r+s$ as a single variable, we will define $n=r+s$ as a substitute. The left-hand side may now be written as $\left|\sum\limits_{k=0}^{\lfloor n/2\rfloor}(-1)^k\binom nk\left(\frac n2-k\right)^{n-1}\right|$. The alternating sum makes it quite tedious, so from there on, we will split the sum in even and odd parts, giving us
$$\left|\sum\limits_{k=0}^{\lfloor n/2\rfloor}(-1)^k\binom nk\left(\frac n2-k\right)^{n-1}\right|=\left|\underset{k~\rm even}{\sum_{k=0}^{\lfloor n/2\rfloor}}\binom nk\left(\frac n2-k\right)^{n-1}-\underset{k~\rm odd}{\sum_{k=1}^{\lfloor n/2\rfloor}}\binom nk\left(\frac n2-k\right)^{n-1}\right|$$

To figure out whether the odd (negative) or even (positive sum is greater in the end, we can figure out that, from {\bf Lemma 5}, $\sum\limits_{k=0}^{\lfloor n/2\rfloor}(-1)^k\binom nk\left(\frac n2-k\right)^{n-1}=\vol(\mathcal C(R)\cap V)\cdot\frac{(n-1)!}{R^{n-1}\sqrt n}\ge0$, so :
$$\sum_{k=0}^{\lfloor n/2\rfloor}(-1)^k\binom nk\left(\frac n2-k\right)^{n-1}\le\underset{k~\rm even}{\sum_{k=0}^{\lfloor n/2\rfloor}}\binom nk\left(\frac n2-k\right)^{n-1}$$

As our first operation, we will expand $\binom nk$ as $\frac{n!}{k!\,(n-k)!}$. As these are positive sums, an asymptotic bound of their summand will yield a sum which has a similar asymptotic relation.

Indeed, for positive-valued functions $f$ and $g$ over the integers, $f(n)=o(g(n))$ implies $\sum_nf(n)=o\left(\sum_ng(n)\right)$, and $f(n)\sim cg(n)\implies\sum_nf(n)=(c+o(1))\sum_ng(n)$. We will use this property on the even sum using Stirling's formula, which states that $n!\sim\sqrt{2\pi n}\left(\frac ne\right)^n$. We will expand the $(n-k)!$ term from the $\frac{n!}{k!\,(n-k)!}$ formula as its asymptotic behavior, according to Stirling :
$$\begin{array}{rcl}
\displaystyle\underset{k~\rm even}{\sum_{k=0}^{\lfloor n/2\rfloor}}\frac{n!}{k!\,(n-k)!}\left(\frac n2-k\right)^{n-1}
&\sim&\displaystyle\frac{e^n\cdot n!}{\sqrt{2\pi}}\underset{k~\rm even}{\sum_{k=0}^{\lfloor n/2\rfloor}}\frac1{k!\,(n-k)^{n-k+1/2}}\left(\frac1e\right)^k\left(\frac n2-k\right)^{n-1}
\end{array}$$

In order to get something a bit simpler to handle, we will upper bound the summand to figure out a simpler expression. We notice that $n/2\le n-k\le n$. Therefore :
$$\displaystyle\frac{e^n\cdot n!}{\sqrt{2\pi}}\underset{k~\rm even}{\sum_{k=0}^{\lfloor n/2\rfloor}}\frac1{k!\,(n-k)^{n-k+1/2}}\left(\frac1e\right)^k\left(\frac n2-k\right)^{n-1}
\le\frac{e^n\cdot n!}{\sqrt{2\pi}(n/2)^{3/2}}\underset{k~\rm even}{\sum_{k=0}^{\lfloor n/2\rfloor}}\frac1{k!}\left(\frac n{2e}\right)^k$$

We will try to find a supremum to the summand. For this, we will compute the derivative of $x\mapsto(n/(2e))^x/\Gamma(x+1)$ :
$$\frac\partial{\partial x}\left(\frac1{\Gamma(x+1)}\left(\frac n{2e}\right)^x\right)=\frac1{\Gamma(x+1)}\left(\frac n{2e}\right)^x\left(\log\left(\frac n{2e}\right)-\psi(x+1)\right),$$
where $\psi$ is the digamma function. $\psi$ is a strictly increasing function over $(0,+\infty)$, and spans all of $\mathbb R$ within this interval, which means it has precisely one root. Since $\log\frac n{2e}\to+\infty$ as $n\to+\infty$, that $\psi(x)=\log(x)+o(1)$, and that more precisely $\exp\psi(x)=x+O(1)$, we can infer that the root of this derivative is asymptotic to $\frac n{2e}$ as $n\to+\infty$.

Hence, we can derive the following relation :

$$\begin{array}{rcl}
\displaystyle\frac{e^n\cdot n!}{\sqrt{2\pi}(n/2)^{3/2}}\underset{k~\rm even}{\sum_{k=0}^{\lfloor n/2\rfloor}}\frac1{k!}\left(\frac n{2e}\right)^k
&\le&\displaystyle\frac{e^n\cdot n!}{\sqrt{2\pi}(n/2)^{3/2}}\underset{k~\rm even}{\sum_{k=0}^{\lfloor n/2\rfloor}}\sup_{k'\in[0,n/2]}\left\{\frac1{k'!}\left(\frac n{2e}\right)^{k'}\right\}\\[20pt]
&\sim&\displaystyle\frac{e^n\cdot n!}{\sqrt{2\pi}(n/2)^{3/2}}\underset{k~\rm even}{\sum_{k=0}^{\lfloor n/2\rfloor}}\frac1{\Gamma(\frac n{2e}+1)}\left(\frac n{2e}\right)^{\frac n{2e}}\\[20pt]
&=&\displaystyle\frac{e^n\cdot n!}{\sqrt{2\pi}(n/2)^{3/2}}\left\lfloor\frac n4\right\rfloor\,\frac1{\Gamma(\frac n{2e}+1)}\left(\frac n{2e}\right)^{\frac n{2e}}
\end{array}$$

Using Stirling's approximation once again, this time as $\Gamma(x+1)\sim\sqrt{2\pi x}(x/e)^x$, as well as $\lfloor x\rfloor\sim x$,
$$\begin{array}{rcl}
\displaystyle\frac{e^n\cdot n!}{\sqrt{2\pi}(n/2)^{3/2}}\left\lfloor\frac n4\right\rfloor\,\frac1{\Gamma(\frac n{2e}+1)}\left(\frac n{2e}\right)^{\frac n{2e}}
&\sim&\displaystyle\frac{\sqrt e}{2\pi}\left(e^{1+\frac1{2e}}\right)^n(n-1)!
\end{array}$$
This concludes the proof.
\end{proof}
\end{document}